\documentclass[11pt]{article}
\usepackage{amsmath,amssymb,amsthm}
\usepackage{graphicx}
\usepackage{algorithm}
\usepackage{algorithmic}
\usepackage{booktabs}
\usepackage{hyperref}
\usepackage[margin=1in]{geometry}

\newtheorem{theorem}{Theorem}[section]
\newtheorem{corollary}[theorem]{Corollary}
\newtheorem{lemma}[theorem]{Lemma}
\newtheorem{proposition}[theorem]{Proposition}
\newtheorem{definition}[theorem]{Definition}
\newtheorem{remark}[theorem]{Remark}
\newtheorem{assumption}[theorem]{Assumption}

\newcommand{\minimize}{\text{minimize}}
\newcommand{\real}{\mathbb{R}}

\title{Kernel-Based LMI Approaches to \\ Solving the Hamilton–Jacobi–Bellman Equation and \\ Nonlinear Optimal Control}

\author{Boumediene Hamzi and Umesh Vaidya}

\date{}

\begin{document}

\maketitle

\begin{abstract}
We present a kernel-based linear matrix inequality (LMI) approach for the approximate solution of Hamilton--Jacobi--Bellman (HJB) equations arising in nonlinear optimal control. The method represents the gradient of the value function in a reproducing kernel Hilbert space (RKHS) and uses a Schur-complement reformulation to convert the quadratic HJB inequality into an LMI that is linear in the kernel coefficients, yielding a convex semidefinite program. The novel ingredient is an explicit Riccati--Hessian \emph{equality} constraint at the equilibrium, which removes the trivial solution and forces the Hessian of the approximation to match the algebraic Riccati equation solution of the linearised system. We give a suboptimality bound $J(x_0;\hat u) - V^*(x_0)\le \varepsilon\,T(x_0)$ in which $T(x_0)$ depends only on the problem data and the working domain (not on the approximation), and an RKHS approximation rate. Numerical experiments on a corrected 1D polynomial benchmark and on the Van der Pol oscillator measure $\varepsilon$, the RKHS approximation error, and the closed-loop cost $J(x_0;\hat u)$ versus the optimal value $V^*(x_0)$. On the 1D problem with $V^*$ in the polynomial-kernel RKHS the method recovers $V^*$ to within $3\times10^{-7}$ and achieves $0.000\%$ suboptimality. On Van der Pol it achieves the smallest HJB residual ($\varepsilon\approx 2.62$) of any method tested, beats LQR on every initial condition, and is within $0.42\%$ of the best per-IC cost (Albrekht order 6). When $V^*$ is not in the chosen RKHS, the method degrades gracefully: residuals stop improving with more centres but suboptimality remains bounded ($\le 13\%$ on the 1D test).  
\end{abstract}

\section{Introduction}

The Hamilton-Jacobi-Bellman (HJB) equation provides the cornerstone for solving nonlinear optimal control problems~\cite{bertsekas_dp}. For a control-affine system $\dot{x} = f(x) + g(x)u$ with infinite-horizon cost functional, the optimal value function $V^*(x)$ satisfies a nonlinear partial differential equation whose solution yields the optimal feedback control law. However, solving the HJB equation directly is computationally intractable for general nonlinear systems, presenting one of the fundamental challenges in control theory. Traditional approaches such as finite difference methods and dynamic programming suffer severely from the \emph{curse of dimensionality}~\cite{bertsekas_dp}, with computational cost growing exponentially in the state dimension. Neural network-based methods have been proposed to address scalability~\cite{hjb_rkhs}; see~\cite{kang_gong_nakamura_fahroo_survey} for a survey of data-generation algorithms for deep-learning approaches to HJB feedback design. These methods typically lack a priori theoretical guarantees on optimality and stability. Sum-of-squares (SOS) programming~\cite{parrilo_thesis, parrilo_sos} offers convexity with rigorous guarantees, but scales poorly with polynomial degree and is restricted to polynomial systems.

Among the various learning and approximation approaches, reproducing kernel Hilbert spaces (RKHSs)~\cite{CuckerandSmale, kanagawa2018gaussian} have emerged as a powerful framework with solid mathematical foundations for the analysis of dynamical systems. The foundational work on kernel methods for nonlinear systems approximation~\cite{bouvrie_hamzi_2017_siam, bouvrie_hamzi_2017_jcd} established that RKHS representations can effectively capture nonlinear input-output maps and system dynamics, with extensions to discrete-time systems~\cite{hamzi_colonius_2019}, balanced reduction~\cite{bouvrie_hamzi_2010_allerton}, and empirical estimators for stochastically forced systems~\cite{bouvrie_hamzi_2012_acc}. Kernel methods have proven particularly effective for approximating operator-theoretic quantities: the Koopman generator and Schr\"odinger operator~\cite{klus_nuske_hamzi_2020}, eigenfunctions of the Koopman operator~\cite{lee_hamzi_hou_owhadi_santin_vaidya_2025}, Lyapunov functions via Koopman eigenfunctions~\cite{lee_owhadi_hamzi_vaidya_2024}, and dimensionality reduction of metastable systems~\cite{bittracher_klus_hamzi_koltai_schutte_2019}. For stability analysis, kernel methods have been applied to Lyapunov function approximation from noisy data~\cite{giesl_hamzi_rasmussen_webster_2019} and center manifold approximation~\cite{haasdonk_hamzi_santin_wittwar_2020, haasdonk_hamzi_santin_wittwar_2021}. Additional applications include multiscale systems with critical transitions~\cite{hamzi_kuehn_mohamed_2019}, microlocal kernel design for slow-fast stochastic differential equations~\cite{hamzi_jafarian_owhadi_paillet_2022}, transport equations~\cite{hamzi_vaidya_owhadi_2025}, kernel sum of squares for data-adapted learning~\cite{lengyel_hamzi_owhadi_parpas_2024}, surrogate modeling~\cite{santin_haasdonk_2019}, forecasting nonlinear time series~\cite{alexander_giannakis_2020}, and connections to neural networks~\cite{smirnov_hamzi_owhadi_2022}. Compared to other learning approaches, kernel-based methods offer interpretability, rigorous theoretical analysis, well-understood numerical implementation with regularization, natural uncertainty quantification, and guaranteed convergence with \textit{a priori} error estimates~\cite{jalalian_ramirez_hsu_hosseini_owhadi_2025, batlle_chen_hosseini_owhadi_stuart_2023, chen_hosseini_owhadi_stuart_2021, long_mrvaljevic_zhe_hosseini_2023, owhadi_cgc}.

More recently, operator-theoretic methods involving the Koopman operator have been used for approximating the solution of the HJB equation~\cite{vaidya_koopman_hj_2025}. The Koopman operator provides a linear (but infinite-dimensional) representation of nonlinear dynamics, enabling the application of linear systems theory to nonlinear problems. Similarly, operator-theoretic methods involving the Perron-Frobenius operator provide convex approaches to optimal control via formulations in the dual space of densities~\cite{raghunathan_vaidya_2013}, with successful applications in data-driven optimal control design~\cite{huang_vaidya_2022, moyalan_choi_chen_vaidya_2023, vaidya_tellez_2023}. While these operator-based methods provide theoretical foundations, their practical implementation relies on computational frameworks such as RKHS-based methods. In this paper, we adopt a direct approach that leverages the RKHS framework both in the convex reformulation and for numerical approximation of the HJB equation.

The proposed approach is inspired by optimal control of linear systems, particularly the convex LMI-based formulation of the Riccati equation~\cite{boyd_cvxbook}. For linear systems, the algebraic Riccati equation $R(P) = 0$ is instrumental in solving the optimal control problem. Strictly speaking, both the Riccati equation $R(P) = 0$ and the inequality $R(P) \leq 0$ are nonconvex in the matrix variable $P$; however, the inequality $R(P) \geq 0$ defines a convex set in $P$. Building on this analogy, we extend the Riccati framework to the Hamilton-Jacobi equation, formulating a convex LMI-based representation of the HJB inequality. While the LMIs derived from the Riccati equation are finite-dimensional, those arising from the HJB equation are infinite-dimensional. To address this, we employ an RKHS-based approach~\cite{rkhs_approximation} to obtain finite-dimensional approximations, augmented with appropriate constraints to ensure stabilizing solutions.

This paper makes the following contributions, which we state precisely so as to distinguish what is genuinely new from what is established in the literature. First, we use the Schur-complement reformulation of the HJB inequality (Lemma~\ref{thm:hjb_lmi}) to obtain a constraint that is affine in the kernel coefficients; the Schur step itself is well known~\cite{boyd_cvxbook} and we present it as a lemma rather than a theorem. Second, we use the standard kernel representation of the gradient (Remark~\ref{rem:rkhs_gradient}). Third---and this is the novel ingredient---we impose the Riccati--Hessian \emph{equality} $\nabla^2 V(0) = P$ inside the SDP, where $P$ is the algebraic Riccati equation solution of the linearisation. The fact that the Hessian of $V^*$ at the equilibrium satisfies the Riccati equation is classical, going back to Al'brekht~\cite{albrekht_1961} and Lukes~\cite{lukes_1969}, and was made algorithmic in the Navasca--Krener Taylor recursion~\cite{navasca_krener_2000}; we cite Proposition~\ref{thm:riccati_matching} accordingly. The new content is the use of this identity as an explicit linear equality constraint inside the convex SDP, which removes the trivial solution $V\equiv 0$ and gives the SDP a unique, well-conditioned solution. Fourth, we derive a suboptimality bound $J(x_0;\hat u) - V^*(x_0)\le \varepsilon\,T(x_0)$ (Theorem~\ref{thm:suboptimality}) in which $T(x_0)$ is determined by the data $(f,g,q,D,\Omega)$ alone, and an RKHS approximation rate (Theorem~\ref{thm:convergence}). The local exponential stability guarantee (Theorem~\ref{thm:stability}) requires that $q$ be locally quadratically positive definite, i.e.\ $q(x) \geq c_q\|x\|^2$ near the origin; we make this hypothesis explicit and note two ways to discharge it (Remark~\ref{rem:local_pd}). Fifth, we provide numerical experiments that report not just stabilisation but \emph{measured optimality}: $\varepsilon = \max_{x\in\Omega}|R(x)|$, the closed-loop cost $J(x_0;\hat u)$, and the suboptimality gap $J(x_0;\hat u) - V^*(x_0)$. The Van der Pol comparison includes the Albrekht / Navasca--Krener Taylor expansion as a baseline. We also include a controlled study of what happens when $V^*$ does not belong to the chosen RKHS, where the method degrades gracefully: even when the kernel cannot represent $V^*$ globally and the HJB residual saturates at some $\varepsilon_{\mathrm{HJB}} > 0$ independent of the number of centres, the Riccati Hessian equality at the origin keeps the closed-loop suboptimality gap bounded --- at most $13.5\%$ on the 1D test of Section~\ref{sec:1d_lossy}, with the bound improving as the kernel degree is increased. Practical kernel selection is thus a meaningful concern but not a fragile one; we return to it in Section~\ref{sec:rkhs_membership}, where a four-step diagnostic procedure (solve at $M$, solve at $2M$, compare residuals) lets the user detect kernel misspecification in two SDP solves and raise the kernel degree as needed.

The remainder of this paper is organized as follows. Section~\ref{sec:problem} formulates the optimal control problem. Section~\ref{sec:theory} develops the theoretical framework including the LMI reformulation, RKHS approximation, Riccati Hessian constraint, and performance guarantees. Section~\ref{sec:giesl_relationship} discusses the relationship to classical Lyapunov function construction. Section~\ref{sec:numerical} presents numerical results, and Section~6 concludes.

\section{Problem Formulation}
\label{sec:problem}

Consider the control-affine system
\begin{equation}
\dot{x} = f(x) + g(x)u, \quad x \in \real^n, \quad u \in \real^m,
\label{eq:system}
\end{equation}
where $f : \real^n \to \real^n$ and $g : \real^n \to \real^{n \times m}$ are smooth functions, with infinite-horizon cost
\begin{equation}
J(x_0, u) = \int_0^\infty \left[ q(x(t)) + \frac{1}{2}u(t)^\top D u(t) \right] dt,
\label{eq:cost}
\end{equation}
where $q : \real^n \to \real_{\geq 0}$ is a state cost function, and $D \in \real^{m \times m}$ is symmetric positive definite.

\begin{assumption}[Standing Assumptions]
\label{ass:standing}
Throughout this paper, we assume:
\begin{enumerate}
\item The functions $f$, $g$, and $q$ are sufficiently smooth (at least $C^2$).
\item The origin $x = 0$ is an equilibrium of the uncontrolled dynamics: $f(0) = 0$.
\item The state cost satisfies $q(0) = 0$ and $q(x) > 0$ for $x \neq 0$.
\item The pair $(A, B)$ is stabilizable, where $A = \frac{\partial f}{\partial x}\big|_{x=0}$ and $B = g(0)$.
\item The pair $(A, Q^{1/2})$ is detectable, where $Q = \nabla^2 q(0)$.
\end{enumerate}
\end{assumption}

The optimal value function $V^*(x) = \inf_u J(x, u)$ satisfies the HJB equation:
\begin{equation}
\frac{\partial V}{\partial x}f(x) - \frac{1}{2}\frac{\partial V}{\partial x}g(x)D^{-1}g(x)^\top\left(\frac{\partial V}{\partial x}\right)^\top + q(x) = 0.
\label{eq:hjb_equality}
\end{equation}

More generally, for approximations, we consider the HJB \emph{inequality}:
\begin{equation}
\frac{\partial V}{\partial x}f(x) - \frac{1}{2}\frac{\partial V}{\partial x}g(x)D^{-1}g(x)^\top\left(\frac{\partial V}{\partial x}\right)^\top + q(x) \geq 0.
\label{eq:hjb}
\end{equation}

When the inequality holds with equality, the optimal control is
\begin{equation}
u^*(x) = -D^{-1}g(x)^\top \left(\frac{\partial V^*}{\partial x}\right)^\top.
\label{eq:optimal_control}
\end{equation}

The challenge is that \eqref{eq:hjb} is a nonlinear partial differential inequality that is generally intractable to solve.

\section{Theoretical Framework}
\label{sec:theory}

\subsection{Preliminary: Schur Complement}

We first recall the Schur complement lemma, which is fundamental to our reformulation.

\begin{lemma}[Schur Complement]
\label{lem:schur}
Let $M$ be a symmetric block matrix of the form
\[
M = \begin{bmatrix} A & B \\ B^\top & C \end{bmatrix}
\]
where $A \in \real^{p \times p}$, $B \in \real^{p \times q}$, and $C \in \real^{q \times q}$ with $C \succ 0$ (positive definite). Then
\[
M \succeq 0 \quad \Longleftrightarrow \quad A - BC^{-1}B^\top \geq 0.
\]
More generally, if $C \succeq 0$, then $M \succeq 0$ if and only if $A \succeq 0$, $\text{range}(B) \subseteq \text{range}(C)$, and $A - BC^\dagger B^\top \succeq 0$, where $C^\dagger$ denotes the Moore-Penrose pseudoinverse.
\end{lemma}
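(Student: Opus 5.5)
The plan is to prove the strict case with an explicit congruence transformation, and then get the semidefinite case from the pseudoinverse identity $CC^\dagger C = C$. When $C \succ 0$ we set
\[
T = \begin{bmatrix} I & -BC^{-1} \\ 0 & I \end{bmatrix},
\]
which is invertible. Multiplying out gives
\[
T M T^\top = \begin{bmatrix} A - BC^{-1}B^\top & 0 \\ 0 & C \end{bmatrix}.
\]
Congruence by an invertible matrix preserves positive semidefiniteness in both directions, since $v^\top (TMT^\top) v = (T^\top v)^\top M (T^\top v)$ and $T^\top$ is a bijection. A block-diagonal matrix is $\succeq 0$ exactly when each diagonal block is. Because $C \succ 0$ holds by hypothesis, $M \succeq 0$ is equivalent to $A - BC^{-1}B^\top \succeq 0$. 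This is the statement; the ``$\geq 0$'' in the display should be read as $\succeq 0$.

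For the general case $C \succeq 0$, we first prove necessity. Suppose $M \succeq 0$. Then $A \succeq 0$, since it is a principal submatrix. For the range condition, take $z \in \ker C$ and test $M$ against the vector $(x, tz)$:
\[
x^\top A x + 2t\, x^\top B z \geq 0 \quad \text{for all } t \in \real.
\]
This forces $x^\top B z = 0$ for every $x$, so $Bz = 0$. Hence $\ker C \subseteq \ker B$, and taking orthogonal complements with $C$ symmetric gives $\mathrm{range}(B^\top) \subseteq \mathrm{range}(C)$. This is the correct form of the stated condition; I will state it that way and remark that ``$\mathrm{range}(B)\subseteq\mathrm{range}(C)$'' only makes sense dimensionally as this condition. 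It follows that $CC^\dagger B^\top = B^\top$.

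Next we use the congruence
\[
T = \begin{bmatrix} I & -BC^\dagger \\ 0 & I \end{bmatrix}.
\]
Using $BC^\dagger C = B$ (the transpose of the range identity) and $C^\dagger C C^\dagger = C^\dagger$, the product $TMT^\top$ is again block diagonal, now with blocks $A - BC^\dagger B^\top$ and $C$. Therefore the Schur complement $A - BC^\dagger B^\top$ is $\succeq 0$.

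Sufficiency runs the same computation backwards. Under the range condition, the block-diagonalization is an identity, $M = T^{-1}\,\mathrm{diag}(A - BC^\dagger B^\top, C)\,T^{-\top}$, and both diagonal blocks are $\succeq 0$, so $M \succeq 0$. The condition $A \succeq 0$ is then redundant, because $BC^\dagger B^\top \succeq 0$.

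The main obstacle is the range condition, both getting its correct orientation and verifying that the off-diagonal blocks vanish in the pseudoinverse case. The one-parameter test-vector argument and the identity $BC^\dagger C = B$ are what I would lean on there.
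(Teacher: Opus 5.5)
Your proposal is correct. For the case $C \succ 0$ it is the paper's argument: the paper factors $M = T^{-1}\,\mathrm{diag}(A - BC^{-1}B^\top,\, C)\,T^{-\top}$ with exactly your $T$ and appeals to preservation of inertia under congruence, whereas you check preservation of $\succeq 0$ directly through the quadratic form.

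The paper's proof stops there and never treats the $C \succeq 0$ clause. Your argument for that clause supplies a proof the paper omits. For necessity you use the principal submatrix for $A$, then the test vector $(x,tz)$ with $z\in\ker C$ to get $\ker C\subseteq\ker B$, then the congruence with $BC^\dagger$ using $BC^\dagger C = B$. The converse goes through the same factorization.

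You are also right that the range condition must read $\mathrm{range}(B^\top)\subseteq\mathrm{range}(C)$. This is more than a dimensional slip, because even when $p=q$ the literal condition is false in general. Take $A = 10I_2$, $B = \bigl[\begin{smallmatrix}1&0\\1&0\end{smallmatrix}\bigr]$, $C = \bigl[\begin{smallmatrix}1&0\\0&0\end{smallmatrix}\bigr]$. Then $M\succeq 0$, yet $\mathrm{range}(B)\not\subseteq\mathrm{range}(C)$.
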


\begin{proof}
Since $C \succ 0$, we can write
\[
M = \begin{bmatrix} I & BC^{-1} \\ 0 & I \end{bmatrix}
\begin{bmatrix} A - BC^{-1}B^\top & 0 \\ 0 & C \end{bmatrix}
\begin{bmatrix} I & 0 \\ C^{-1}B^\top & I \end{bmatrix}.
\]
This is a congruence transformation, which preserves the signature (number of positive, negative, and zero eigenvalues). Since $C \succ 0$, we have $M \succeq 0$ if and only if $A - BC^{-1}B^\top \succeq 0$.
\end{proof}

\subsection{HJB Inequality as Linear Matrix Inequality}

The following Schur-complement reformulation of the quadratic HJB inequality is well known in the convex-optimisation literature (see Boyd \& Vandenberghe~\cite{boyd_cvxbook}, \S A.5.5); we restate it here in the form needed for the SDP.

\begin{lemma}[HJB Inequality as LMI]
\label{thm:hjb_lmi}
The HJB inequality \eqref{eq:hjb} is equivalent to the linear matrix inequality
\begin{equation}
M(x) := \begin{bmatrix}
2\left(\frac{\partial V}{\partial x}f(x) + q(x)\right) & \frac{\partial V}{\partial x}g(x) \\
g(x)^\top\left(\frac{\partial V}{\partial x}\right)^\top & D
\end{bmatrix} \succeq 0.
\label{eq:lmi}
\end{equation}
\end{lemma}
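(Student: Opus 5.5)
The plan is to apply Lemma~\ref{lem:schur} pointwise in $x$, with the $(1,1)$ block scalar ($p=1$) and the $(2,2)$ block equal to $D$ ($q=m$). Since $D$ is symmetric positive definite by the problem formulation, the invertible case of Lemma~\ref{lem:schur} applies, and the pseudoinverse version is not needed. Fix $x$ and write $a(x) := 2\bigl(\frac{\partial V}{\partial x}f(x) + q(x)\bigr)\in\real$ and $b(x) := \frac{\partial V}{\partial x}g(x)\in\real^{1\times m}$. Then $M(x)$ in \eqref{eq:lmi} is symmetric with blocks $A=a(x)$, $B=b(x)$, $C=D\succ 0$.

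By Lemma~\ref{lem:schur}, $M(x)\succeq 0$ holds if and only if the scalar Schur complement satisfies
\[
a(x) - b(x)D^{-1}b(x)^\top = 2\Bigl(\tfrac{\partial V}{\partial x}f(x)+q(x)\Bigr) - \tfrac{\partial V}{\partial x}g(x)D^{-1}g(x)^\top\Bigl(\tfrac{\partial V}{\partial x}\Bigr)^\top \ge 0 .
\]
Dividing by $2>0$ preserves the inequality and gives exactly \eqref{eq:hjb}. Since $x$ was arbitrary, the HJB inequality holds at a point (or on a set $\Omega$) precisely when $M(x)\succeq 0$ there, which is the claimed equivalence.

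I would also note explicitly why \eqref{eq:lmi} deserves the name ``linear'': every entry of $M(x)$ is affine in the gradient $\frac{\partial V}{\partial x}$, whereas the quadratic term has moved into the Schur structure. Once the gradient is represented linearly in kernel coefficients (Remark~\ref{rem:rkhs_gradient}), $M(x)$ is affine in those coefficients.

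There is no real obstacle here. The only points needing care are the factor of $2$, which must be placed so that the Schur complement reproduces the $\tfrac12$ in \eqref{eq:hjb}, and the strict positivity of $D$, which justifies using the $C\succ 0$ branch of Lemma~\ref{lem:schur}.
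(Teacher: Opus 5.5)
Your proposal is correct and essentially the paper's own proof: both apply the $C = D \succ 0$ case of Lemma~\ref{lem:schur} pointwise, with scalar $(1,1)$ block $2\bigl(\frac{\partial V}{\partial x}f(x)+q(x)\bigr)$ and off-diagonal row vector $\frac{\partial V}{\partial x}g(x)$. The only cosmetic differences are that the paper multiplies the HJB inequality by $2$ at the start while you divide by $2$ at the end, and that your labels $a$ and $b$ are swapped relative to the paper's.
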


\begin{proof}
Multiplying the HJB inequality \eqref{eq:hjb} by 2, we obtain
\begin{equation}
2\left(\frac{\partial V}{\partial x}f(x) + q(x)\right) - \frac{\partial V}{\partial x}g(x)D^{-1}g(x)^\top\left(\frac{\partial V}{\partial x}\right)^\top \geq 0.
\label{eq:hjb_scaled}
\end{equation}

Define $a := \frac{\partial V}{\partial x}g(x) \in \real^{1 \times m}$ (a row vector), $b := 2\left(\frac{\partial V}{\partial x}f(x) + q(x)\right) \in \real$ (a scalar), and note that $D \succ 0$ by assumption.

Then inequality \eqref{eq:hjb_scaled} can be written as
\[
b - aD^{-1}a^\top \geq 0.
\]

By Lemma~\ref{lem:schur} (Schur complement), since $D \succ 0$, this is equivalent to
\[
\begin{bmatrix} b & a \\ a^\top & D \end{bmatrix} \succeq 0,
\]
which is precisely the matrix $M(x)$ in \eqref{eq:lmi}.
\end{proof}

\begin{remark}[Significance of LMI Reformulation]
While the Schur-complement step of Lemma~\ref{thm:hjb_lmi} is not new, its consequence for the present setting is important: it transforms the nonlinear HJB inequality into a constraint that is \emph{linear} in the value-function gradient, hence linear in the kernel coefficients $p$ once the gradient is parameterised. The quadratic term in the original HJB inequality becomes a linear constraint on $p$ via the Schur complement, which is what allows the SDP formulation in Theorem~\ref{thm:collocation}.
\end{remark}

\begin{remark}[Convexity asymmetry of the HJB LMI]
\label{rem:convexity_asymmetry}
The convexity of the proposed SDP formulation relies critically on the sign of the HJB inequality. In particular, the Schur-complement reformulation of
\[
\nabla V(x)^\top f(x) - \tfrac{1}{2}\nabla V(x)^\top g(x) D^{-1} g(x)^\top \nabla V(x) + q(x) \;\geq\; 0
\]
leads to the matrix inequality
\[
M(x) \;=\; \begin{bmatrix} 2\!\left(\nabla V(x)^\top f(x) + q(x)\right) & \nabla V(x)^\top g(x) \\ g(x)^\top \nabla V(x) & D \end{bmatrix} \;\succeq\; 0,
\]
which is affine in the kernel coefficients and therefore defines a convex feasible set. This is analogous to the classical Riccati inequality in linear quadratic control, where
\[
A^\top P + PA - PBR^{-1}B^\top P + Q \;\geq\; 0
\]
admits a convex LMI representation. However, the opposite inequality
\[
\nabla V(x)^\top f(x) - \tfrac{1}{2}\nabla V(x)^\top g(x) D^{-1} g(x)^\top \nabla V(x) + q(x) \;\leq\; 0
\]
leads, via the same Schur-complement argument, to $M(x) \preceq 0$. Unlike the positive semidefinite case, the constraint $M(x) \preceq 0$ does not define a convex set in the decision variables because the Schur complement reverses the definiteness structure. Equivalently, this corresponds to the nonconvex side of the Riccati inequality. Hence, while the supersolution formulation of the HJB equation admits a convex SDP representation, the corresponding subsolution formulation is intrinsically nonconvex and substantially more difficult to solve computationally. This convexity asymmetry is inherited directly from the classical Riccati inequality and carries over naturally to the Hamilton--Jacobi setting.

Despite this nonconvexity, the inequality
\[
\nabla V(x)^\top f(x) - \tfrac{1}{2}\nabla V(x)^\top g(x) D^{-1} g(x)^\top \nabla V(x) + q(x) \;\leq\; 0
\]
remains fundamentally important from a control-theoretic perspective. In particular, if $V(x)$ is positive definite and the feedback law is chosen as $u(x) = -D^{-1}g(x)^\top \nabla V(x)$, then along the closed-loop trajectories
\[
\dot V(x) \;=\; \nabla V(x)^\top\bigl(f(x)+g(x)u(x)\bigr) \;=\; \nabla V(x)^\top f(x) - \nabla V(x)^\top g(x) D^{-1} g(x)^\top \nabla V(x).
\]
Using the HJB subsolution inequality yields
\[
\dot V(x) \;\leq\; -q(x) - \tfrac{1}{2}\nabla V(x)^\top g(x) D^{-1} g(x)^\top \nabla V(x) \;\leq\; 0,
\]
with strict negativity away from the equilibrium whenever $q(x) > 0$. Hence any positive definite solution of the $\leq 0$ HJB inequality automatically provides a stabilising feedback controller together with a Lyapunov certificate for the closed-loop system. Thus, although the resulting optimisation problem is nonconvex, the associated inequality is particularly useful because it directly encodes closed-loop stability and dissipativity properties.
\end{remark}

\subsection{RKHS Representation of the Value Function}

The second key component is representing the value function gradient in a reproducing kernel Hilbert space.

\begin{definition}[Reproducing Kernel Hilbert Space]
\label{def:rkhs}
Let $\Omega \subseteq \real^n$ be a domain. A Hilbert space $\mathcal{H}$ of functions $f : \Omega \to \real$ is called a reproducing kernel Hilbert space (RKHS) if there exists a symmetric, positive definite kernel $\kappa : \Omega \times \Omega \to \real$ such that:
\begin{enumerate}
\item For all $x \in \Omega$, the function $\kappa(\cdot, x) \in \mathcal{H}$.
\item (Reproducing property) For all $f \in \mathcal{H}$ and $x \in \Omega$: $f(x) = \langle f, \kappa(\cdot, x) \rangle_{\mathcal{H}}$.
\end{enumerate}
\end{definition}

\begin{remark}[RKHS gradient and Hessian representation]
\label{rem:rkhs_gradient}
For any $V \in \mathcal{H}_M = \mathrm{span}\{\kappa(\cdot,x_i)\}_{i=1}^M$ with $\kappa \in C^2$, differentiating the linear combination $V(x) = \sum_{i=1}^M p_i \kappa(x,x_i)$ termwise gives
\begin{align}
\nabla V(x) &= \sum_{i=1}^M p_i\, \nabla_x \kappa(x, x_i) \;=:\; K_x p, \label{eq:gradient_representation}\\
\nabla^2 V(x) &= \sum_{i=1}^M p_i\, \nabla^2_x \kappa(x, x_i),\label{eq:hessian_representation}
\end{align}
where $K_x \in \real^{n \times M}$ has columns $\nabla_x \kappa(x, x_i)$. Both expressions are linear in $p$, which is what permits the LMI formulation. (This is a basic property of kernel expansions; see for instance~\cite{wendland_scattered}.)
\end{remark}

\begin{remark}[Approximation Quality]
For general $V^* \in \mathcal{H}$ (not necessarily in $\mathcal{H}_M$), the finite-dimensional representation $\hat V(x) = \sum_{i=1}^M p_i \kappa(x,x_i)$ provides an approximation. The quality of this approximation depends on the distribution of centers $\{x_i\}$ and the smoothness of $V^*$. Standard RKHS approximation theory provides error bounds; see Theorem~\ref{thm:convergence} below. The case where $V^*$ is \emph{not} in the chosen RKHS, and the resulting trade-off between residual size and suboptimality, is studied numerically in Section~\ref{sec:1d_lossy}.
\end{remark}

\subsection{Avoiding Trivial Solutions: The Riccati Hessian Constraint}
\label{sec:riccati_constraint}

A critical challenge in solving the HJB inequality via optimization is the existence of the trivial solution $V(x) \equiv 0$, which satisfies all HJB constraints but provides no useful control law. To eliminate this undesirable solution while ensuring consistency with optimal control theory, we impose constraints at the equilibrium point (assumed to be the origin without loss of generality).

\begin{definition}[Equilibrium Constraints]
\label{def:equilibrium_constraints}
For a system with equilibrium at $x=0$, we impose:
\begin{enumerate}
\item \textbf{Boundary condition:} $V(0) = 0$
\item \textbf{Gradient condition:} $\nabla V(0) = 0$
\item \textbf{Riccati Hessian condition:} $\nabla^2 V(0) = P$
\end{enumerate}
where $P$ is the solution to the algebraic Riccati equation (ARE) for the linearized system.
\end{definition}

The third constraint deserves special attention as it is the key to preventing trivial solutions.

\begin{proposition}[Riccati Matching --- order-2 case of the Albrekht expansion]
\label{thm:riccati_matching}
This identity is classical: it is the order-2 truncation of the Taylor-series expansion of the value function due to Al'brekht~\cite{albrekht_1961} and developed further by Lukes~\cite{lukes_1969}; the recursive computation of higher-order Taylor coefficients is the Navasca--Krener procedure~\cite{navasca_krener_2000}. Consider the linearisation of \eqref{eq:system} at the origin:
\begin{equation}
\dot{x} = Ax + Bu, \quad A = \frac{\partial f}{\partial x}\bigg|_{x=0}, \quad B = g(0),
\label{eq:linearization}
\end{equation}
with quadratic approximation $V(x) \approx \frac{1}{2}x^\top P x$ near the origin. If $V$ satisfies the HJB equation \eqref{eq:hjb_equality} and $\nabla^2 V(0) = P$, then $P$ satisfies the algebraic Riccati equation
\begin{equation}
A^\top P + PA - PBD^{-1}B^\top P + Q = 0,
\label{eq:are}
\end{equation}
where $Q = \nabla^2 q(0)$ is the Hessian of the state cost at the origin. The contribution of the present paper is not the identity itself but its use as an explicit linear equality constraint inside the convex SDP~\eqref{eq:sdp}, where it removes the trivial solution and renders the SDP well-posed.
\end{proposition}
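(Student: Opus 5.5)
Because the HJB equation \eqref{eq:hjb_equality} holds identically in a neighbourhood of the origin, its second-order Taylor coefficient must vanish. The plan is to match that coefficient and read off \eqref{eq:are}.

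First I record what is known at the equilibrium. Since $V$ is a $C^2$ (indeed $C^3$ would be convenient) solution with a minimum at $0$ (as $V\ge 0$ and $V(0)=0$, matching Definition~\ref{def:equilibrium_constraints}), we have $\nabla V(0)=0$. This gives the expansion
\[
\nabla V(x)^\top = Px + o(\|x\|),\qquad P=\nabla^2V(0).
\]
Likewise $f(0)=0$ gives $f(x)=Ax+o(\|x\|)$. Continuity of $g$ gives $g(x)=B+o(1)$. Finally $q(0)=0$ and $\nabla q(0)=0$ (because $0$ is a minimum of $q\ge0$) give
\[
q(x)=\tfrac12x^\top Qx+o(\|x\|^2).
\]

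Next I substitute these expansions into \eqref{eq:hjb_equality}. The drift term becomes $\frac{\partial V}{\partial x}f(x)=x^\top PAx+o(\|x\|^2)$, which after symmetrisation equals $\tfrac12x^\top(A^\top P+PA)x+o(\|x\|^2)$. The control term becomes
\[
\tfrac12 x^\top PBD^{-1}B^\top Px+o(\|x\|^2).
\]
Collecting terms, the left-hand side equals
\[
\tfrac12 x^\top\bigl(A^\top P+PA-PBD^{-1}B^\top P+Q\bigr)x+o(\|x\|^2),
\]
and this expression is identically zero near the origin.

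To conclude, I set $x=t v$ for a fixed unit vector $v$, divide by $t^2$, and let $t\to0$. This gives $v^\top S v=0$ for every $v$, where $S$ is the bracketed matrix. Since $S$ is symmetric, $S=0$, which is exactly \eqref{eq:are}.

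The main point requiring care is the remainder bookkeeping. In particular, the quadratic term in $\nabla V$ with $g(x)D^{-1}g(x)^\top$ must be shown to contribute only $o(\|x\|^2)$ errors. This holds because each factor of $\nabla V$ is $O(\|x\|)$ while $g(x)D^{-1}g(x)^\top-BD^{-1}B^\top=o(1)$, so $C^2$ regularity suffices.

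I would also remark, without needing it for the stated implication, that under Assumption~\ref{ass:standing}(4)--(5) the Hessian of the optimal $V^*$ is the unique stabilising solution of \eqref{eq:are}. This is the classical Al'brekht--Lukes argument, and it justifies selecting that particular $P$ in the constraint.
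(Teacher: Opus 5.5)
Your proposal is correct and follows the paper's proof: Taylor-expand $\nabla V$, $f$, $g$ and $q$ at the origin, substitute into the HJB equation, and conclude that the symmetric coefficient $A^\top P+PA-PBD^{-1}B^\top P+Q$ vanishes. Your little-$o$ remainders and $x=tv$ scaling step are a more careful version of the paper's $O(\|x\|^3)$ bookkeeping and need only $C^2$ regularity of $V$.

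One small fix: $V\ge 0$ is not a hypothesis of the proposition, and it can fail (e.g.\ $V=-x^2$ also solves the HJB equation of the 1D benchmark). As the paper does, take $\nabla V(0)=0$ from the equilibrium constraints of Definition~\ref{def:equilibrium_constraints} instead.
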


\begin{proof}
Consider the Taylor expansion of the value function around the origin:
\[
V(x) = V(0) + \nabla V(0)^\top x + \frac{1}{2}x^\top \nabla^2 V(0) x + O(\|x\|^3).
\]
By the equilibrium constraints (Definition~\ref{def:equilibrium_constraints}), we have $V(0) = 0$, $\nabla V(0) = 0$, and $\nabla^2 V(0) = P$, so
\[
V(x) = \frac{1}{2}x^\top P x + O(\|x\|^3).
\]

Similarly, expand the system dynamics:
\[
f(x) = f(0) + \frac{\partial f}{\partial x}\bigg|_{x=0} x + O(\|x\|^2) = Ax + O(\|x\|^2),
\]
and
\[
g(x) = g(0) + O(\|x\|) = B + O(\|x\|).
\]

The state cost expands as:
\[
q(x) = q(0) + \nabla q(0)^\top x + \frac{1}{2}x^\top Q x + O(\|x\|^3) = \frac{1}{2}x^\top Q x + O(\|x\|^3),
\]
using $q(0) = 0$ and $\nabla q(0) = 0$ (from Assumption~\ref{ass:standing}).

The gradient of the value function is:
\[
\nabla V(x) = Px + O(\|x\|^2).
\]

Substituting into the HJB equation \eqref{eq:hjb_equality}:
\begin{align*}
&\nabla V(x)^\top f(x) - \frac{1}{2}\nabla V(x)^\top g(x)D^{-1}g(x)^\top \nabla V(x) + q(x) \\
&= (Px)^\top (Ax) - \frac{1}{2}(Px)^\top B D^{-1} B^\top (Px) + \frac{1}{2}x^\top Q x + O(\|x\|^3) \\
&= x^\top PAx - \frac{1}{2}x^\top PB D^{-1} B^\top P x + \frac{1}{2}x^\top Q x + O(\|x\|^3) \\
&= \frac{1}{2}x^\top \left( A^\top P + PA - PBD^{-1}B^\top P + Q \right) x + O(\|x\|^3) = 0.
\end{align*}

For this to hold for all $x$ in a neighborhood of the origin, the quadratic coefficient must vanish:
\[
A^\top P + PA - PBD^{-1}B^\top P + Q = 0. \qedhere
\]
\end{proof}

\begin{remark}[Significance of the Riccati Constraint]
\label{rem:riccati_significance}
The Riccati Hessian constraint $\nabla^2 V(0) = P$ serves multiple critical purposes:
\begin{enumerate}
\item \textbf{Prevents trivial solution}: Since $P \succ 0$ (positive definite for stabilizable systems under Assumption~\ref{ass:standing}), the constraint explicitly excludes $V \equiv 0$.
\item \textbf{Ensures consistency}: Local behavior near the equilibrium matches the LQR solution, providing correct feedback gains.
\item \textbf{Guarantees stability}: The constraint ensures exponential stability with convergence rates predicted by linear theory.
\item \textbf{Computational efficiency}: The constraint is linear in the kernel coefficients $p$ and adds only $O(n^2)$ scalar equality constraints (one per independent entry of the symmetric Hessian). This is dwarfed by the $O(N)$ LMI constraints from the collocation system, so the Riccati-Hessian equality has negligible incremental cost --- it does not introduce a state-dimension bottleneck that the rest of the SDP does not already have.
\end{enumerate}
\end{remark}

\begin{remark}[Role of the Riccati Hessian under RKHS misspecification]
\label{rem:riccati_misspec}
When $V^*$ belongs to the chosen RKHS, the Riccati Hessian equality is redundant with the HJB equation at the origin and the SDP simply recovers $V^*$ (see Section~\ref{sec:1d_consistent}). The constraint plays its essential role in the misspecified case $V^*\notin\mathcal{H}$. Even when the kernel cannot represent $V^*$ globally --- so the HJB residual $R(x)$ saturates at some $\varepsilon > 0$ independent of the number of centres --- the equality $\nabla^2\hat V(0) = P$ forces $\hat V$ to have the correct local quadratic behaviour, and hence the closed-loop feedback $\hat u(x) = -D^{-1}g(x)^\top\nabla\hat V(x)$ has the correct linearisation at the origin. Because the cost integral $J(x_0;\hat u) = \int_0^\infty (q(x(t)) + \tfrac12 \hat u^\top D\hat u)\,dt$ is dominated by the time the trajectory spends near the origin --- the running cost is quadratic in $x$ to leading order and $\|x(t)\|$ decays exponentially under the stabilising feedback --- this local accuracy is what bounds the suboptimality gap when the global residual is large. Section~\ref{sec:1d_lossy} reports the corresponding numerical experiment: a deg-2 polynomial kernel that cannot represent the quartic part of $V^*$ yields $\varepsilon_{\mathrm{HJB}} = 30.4$ but a suboptimality gap of only $5.3\%$ in the mean and $13.5\%$ in the worst case, independent of $M$.
\end{remark}

\begin{corollary}[Scalar System Riccati Solution]
\label{cor:scalar_riccati}
For a scalar system ($n=m=1$) with $A, B, D \in \real$, $B \neq 0$, and $Q > 0$, the positive solution to the ARE \eqref{eq:are} is
\begin{equation}
P = \frac{AD}{B^2} + \sqrt{\left(\frac{AD}{B^2}\right)^2 + \frac{QD}{B^2}}.
\label{eq:scalar_riccati}
\end{equation}
For the common case $A = B = D = 1$, this simplifies to $P = 1 + \sqrt{1 + Q}$.
\end{corollary}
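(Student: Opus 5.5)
In the scalar case ($n=m=1$) the algebraic Riccati equation \eqref{eq:are} reduces to a quadratic in the single unknown $P$. The plan is to write it in standard form, apply the quadratic formula, and use the sign of $Q$ to select the positive root, which is the one relevant under Remark~\ref{rem:riccati_significance}.

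With $A,B,D\in\real$, the expressions $A^\top P + PA$ and $PBD^{-1}B^\top P$ become $2AP$ and $\frac{B^2}{D}P^2$. The ARE is therefore
\[
-\frac{B^2}{D}P^2 + 2AP + Q = 0 .
\]
Since $B\neq 0$ and $D>0$, the leading coefficient is nonzero, so we may multiply by $-D/B^2$:
\[
P^2 - 2\frac{AD}{B^2}P - \frac{QD}{B^2} = 0 .
\]
The quadratic formula gives
\[
P = \frac{AD}{B^2} \pm \sqrt{\left(\frac{AD}{B^2}\right)^2 + \frac{QD}{B^2}} .
\]
The discriminant is strictly positive because $QD/B^2>0$, so both roots are real and distinct.

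Next we identify the positive root. The product of the two roots equals the constant term $-QD/B^2$, which is negative. Hence the roots have opposite signs and exactly one of them is positive. Since the square root strictly exceeds $|AD/B^2|$, the root with the $+$ sign is positive and the one with the $-$ sign is negative. This gives \eqref{eq:scalar_riccati}.

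A consistency check is worth recording. The resulting closed-loop coefficient is $A - B^2P/D = -\sqrt{A^2 + QB^2/D} < 0$, so the positive root is also the stabilizing solution. Finally, substituting $A=B=D=1$ gives $P = 1+\sqrt{1+Q}$.

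The only step needing care is this sign argument, namely showing that exactly one root is positive and that it is the $+$ root. Beyond that, the proof is routine algebra.
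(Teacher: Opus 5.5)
Your proposal is correct and follows the same route as the paper: you reduce the ARE to the scalar quadratic $\frac{B^2}{D}P^2 - 2AP - Q = 0$, apply the quadratic formula, and take the $+$ root. Your root-selection step is more careful than the paper's. The product of the roots is $-QD/B^2<0$ (using $D>0$), so the roots have opposite signs and the $+$ root is the positive one, whereas the paper simply asserts that the $+$ sign gives the positive solution.
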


\begin{proof}
The scalar ARE is:
\[
2AP - \frac{P^2 B^2}{D} + Q = 0,
\]
which can be rewritten as:
\[
\frac{B^2}{D} P^2 - 2AP - Q = 0.
\]

Applying the quadratic formula with coefficients $a = B^2/D$, $b = -2A$, $c = -Q$:
\[
P = \frac{2A \pm \sqrt{4A^2 + 4 \cdot \frac{B^2}{D} \cdot Q}}{2 \cdot \frac{B^2}{D}} = \frac{A \pm \sqrt{A^2 + \frac{B^2 Q}{D}}}{\frac{B^2}{D}}.
\]

Simplifying:
\[
P = \frac{AD}{B^2} \pm \sqrt{\left(\frac{AD}{B^2}\right)^2 + \frac{QD}{B^2}}.
\]

The positive solution (required for $P \succ 0$) corresponds to the $+$ sign.
\end{proof}

\subsection{Finite-Dimensional Collocation System with Riccati Constraint}

\begin{theorem}[Collocation LMI System with Equilibrium Constraints]
\label{thm:collocation}
Let $\{x_j\}_{j=1}^N \subset \Omega$ be collocation points and $\{x_i\}_{i=1}^M$ be kernel centers. Define the notation:
\begin{align*}
\nabla_k f(x_j) &:= [\nabla_x \kappa(x_j, x_1)^\top f(x_j), \ldots, \nabla_x \kappa(x_j, x_M)^\top f(x_j)]^\top \in \real^M, \\
\nabla_k g(x_j) &:= [\nabla_x \kappa(x_j, x_1)^\top g(x_j), \ldots, \nabla_x \kappa(x_j, x_M)^\top g(x_j)]^\top \in \real^{M \times m}.
\end{align*}

The constrained semidefinite program
\begin{subequations}
\begin{align}
\minimize_{p \in \real^M} \quad & \|p\|^2 \label{eq:sdp_obj} \\
\text{subject to} \quad & \sum_{i=1}^M p_i \kappa(0, x_i) = 0, \label{eq:sdp_boundary} \\
& \sum_{i=1}^M p_i \nabla_x \kappa(0, x_i) = 0, \label{eq:sdp_gradient} \\
& \sum_{i=1}^M p_i \nabla_x^2 \kappa(0, x_i) = P, \label{eq:sdp_hessian} \\
& M_j(p) \succeq 0, \quad j = 1, \ldots, N, \label{eq:sdp_lmi}
\end{align}
\label{eq:sdp}
\end{subequations}
where
\begin{equation}
M_j(p) := \begin{bmatrix}
2(p^\top \nabla_k f(x_j) + q(x_j)) & p^\top \nabla_k g(x_j) \\
(\nabla_k g(x_j))^\top p & D
\end{bmatrix},
\label{eq:Mj_matrix}
\end{equation}
is a convex semidefinite program that prevents the trivial solution while enforcing consistency with linearized optimal control theory.
\end{theorem}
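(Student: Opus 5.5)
The theorem makes three claims: the program is convex, it is a semidefinite program, and its constraints exclude the trivial solution while enforcing consistency with linearised optimal control. I will check each claim directly from the structure of the constraints.

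\textbf{Affinity of the constraints in $p$.} By Remark~\ref{rem:rkhs_gradient}, for $V(x)=\sum_i p_i\kappa(x,x_i)$ the quantities $V(0)$, $\nabla V(0)$ and $\nabla^2 V(0)$ are all linear in $p$. Hence \eqref{eq:sdp_boundary}--\eqref{eq:sdp_hessian} are finitely many affine equality constraints, at most $1+n+n(n+1)/2$ scalar equations, since the Hessian is symmetric. Their common solution set is an affine subspace of $\real^M$, which is convex.

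\textbf{Each $M_j(p)$ is an LMI.} Evaluating the gradient representation \eqref{eq:gradient_representation} at $x_j$ gives $\nabla V(x_j)^\top f(x_j)=p^\top\nabla_k f(x_j)$ and $\nabla V(x_j)^\top g(x_j)=p^\top\nabla_k g(x_j)$. So $M_j(p)$ is exactly the matrix $M(x_j)$ of Lemma~\ref{thm:hjb_lmi}, and it has the form
\[
M_j(p)=M_j^{(0)}+\sum_{i=1}^M p_i M_j^{(i)},
\]
with constant symmetric matrices $M_j^{(0)}=\mathrm{diag}(2q(x_j),D)$ and
\[
M_j^{(i)}=\begin{bmatrix}2\nabla_x\kappa(x_j,x_i)^\top f(x_j) & \nabla_x\kappa(x_j,x_i)^\top g(x_j)\\ (\cdot)^\top & 0\end{bmatrix}.
\]
Thus $M_j(p)\succeq 0$ is a standard LMI. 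Its feasible set is convex because it is the preimage of the closed convex PSD cone under an affine map; equivalently, it is an intersection over vectors $v$ of half-spaces $v^\top M_j(p)v\ge 0$, each affine in $p$. By Lemma~\ref{thm:hjb_lmi}, each such constraint is equivalent to the HJB inequality \eqref{eq:hjb} at $x_j$.

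\textbf{Convexity of the objective and SDP form.} The objective $\|p\|^2$ is a convex quadratic. To put the problem in standard SDP form, I introduce an epigraph variable $t$ and minimise $t$ subject to $\begin{bmatrix} t & p^\top\\ p & I\end{bmatrix}\succeq 0$. By Lemma~\ref{lem:schur} with $C=I\succ0$, this constraint is equivalent to $t\ge\|p\|^2$. The program is therefore a convex SDP. If it is feasible, the objective is strictly convex on a closed convex set, so a minimiser exists (the objective is coercive) and is unique.

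\textbf{Exclusion of the trivial solution and local consistency.} $V\equiv0$ corresponds to $p=0$, or more generally to any $p$ with $\nabla^2 V(0)=0$. But \eqref{eq:sdp_hessian} forces $\nabla^2V(0)=P$, and $P\succ0$ under Assumption~\ref{ass:standing} by stabilisability and detectability. Since $P\neq 0$, every feasible $p$ yields a nontrivial $V$.

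For consistency, constraints \eqref{eq:sdp_boundary}--\eqref{eq:sdp_hessian} give $V(x)=\tfrac12x^\top Px+O(\|x\|^3)$. The induced feedback is then $\hat u=-D^{-1}g^\top\nabla V=-D^{-1}B^\top Px+O(\|x\|^2)$, which is the LQR law. Proposition~\ref{thm:riccati_matching} confirms this is the only Hessian compatible with the HJB equation at the origin.

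\textbf{Main obstacle: feasibility.} The statement does not assert that the program is feasible, and I would state this explicitly as an assumption. Feasibility requires the span of $\{\nabla^2_x\kappa(0,x_i)\}_{i=1}^M$ to contain $P$, jointly with the other constraints. If some $V^*$-like function in $\mathcal H_M$ satisfies \eqref{eq:hjb} at the collocation points, then feasibility holds automatically. A further subtlety is the collocation point $x_j=0$: there $M_j=\mathrm{diag}(0,D)\succeq0$, so that point is harmless.
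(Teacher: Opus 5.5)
Your proposal is correct and follows essentially the same route as the paper's proof, which likewise checks that $\|p\|^2$ is convex, that the boundary, gradient and Riccati--Hessian conditions are linear equalities in $p$, that $M_j(p)$ is affine in $p$, and that the Hessian equality with $P\succ 0$ excludes the trivial solution.

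Your additions are sound refinements. The epigraph/Schur step makes the SDP form literal, you obtain existence and uniqueness of the minimiser, and arguing via $\nabla^2 V(0)=P\neq 0$ rather than the paper's $p\neq 0$ is the right level, since with the polynomial kernels used later many nonzero $p$ give $V\equiv 0$. One correction: stabilisability and detectability only yield $P\succeq 0$, and what you actually need, $P\neq 0$, is equivalent under the standing assumptions to $Q\neq 0$ (an extra hypothesis the paper also takes for granted).
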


\begin{proof}
We verify that all constraints are convex in $p$:

\textbf{1. Objective function:} $\|p\|^2 = p^\top p$ is convex (quadratic with positive definite Hessian $2I$).

\textbf{2. Boundary constraint \eqref{eq:sdp_boundary}:} This is a linear equality constraint in $p$:
\[
\sum_{i=1}^M p_i \kappa(0, x_i) = k_0^\top p = 0,
\]
where $k_0 = [\kappa(0, x_1), \ldots, \kappa(0, x_M)]^\top$.

\textbf{3. Gradient constraint \eqref{eq:sdp_gradient}:} This consists of $n$ linear equality constraints:
\[
\sum_{i=1}^M p_i \frac{\partial \kappa}{\partial x_\ell}(0, x_i) = 0, \quad \ell = 1, \ldots, n.
\]

\textbf{4. Hessian constraint \eqref{eq:sdp_hessian}:} This consists of $n(n+1)/2$ linear equality constraints (using symmetry):
\[
\sum_{i=1}^M p_i \frac{\partial^2 \kappa}{\partial x_\ell \partial x_k}(0, x_i) = P_{\ell k}, \quad 1 \leq \ell \leq k \leq n.
\]

\textbf{5. LMI constraints \eqref{eq:sdp_lmi}:} The matrix $M_j(p)$ is affine in $p$ because the (1,1) entry $2(p^\top \nabla_k f(x_j) + q(x_j))$ is linear in $p$ plus a constant $2q(x_j)$, the (1,2) entry $p^\top \nabla_k g(x_j)$ is linear in $p$, the (2,1) entry is its transpose and hence also linear, and the (2,2) entry $D$ is constant. An LMI constraint with an affine matrix is convex.

\textbf{6. Preventing trivial solution:} The Riccati constraint \eqref{eq:sdp_hessian} requires $\sum_{i=1}^M p_i \nabla_x^2 \kappa(0, x_i) = P$ with $P \succ 0$. This forces $p \neq 0$, since $p = 0$ would give $0 = P \succ 0$, a contradiction.

Therefore, the optimization problem \eqref{eq:sdp} is a convex semidefinite program.
\end{proof}

\subsection{Performance Guarantees}

The suboptimality estimate below is stated as a \emph{conditional} performance bound: a finite-time trajectory inequality (Theorem~\ref{thm:trajectory_inequality}) that holds without any global hypothesis, and a separate comparison-principle step (Proposition~\ref{prop:supersolution}) that bridges $\hat V$ and $V^*$. Earlier drafts of this paper combined these two arguments into a single statement and the proof was not fully rigorous; we thank the reviewer for pointing this out.

\begin{theorem}[Trajectory inequality: conditional performance estimate]
\label{thm:trajectory_inequality}
\label{thm:suboptimality}
Let $\hat{V}\in C^1(\Omega)$ satisfy $\hat V(0)=0$, and define the feedback
\[
\hat u(x) \;:=\; -D^{-1} g(x)^\top \nabla \hat V(x).
\]
Assume that for some $\varepsilon \geq 0$ the residual
\begin{equation}
R_{\hat V}(x) \;:=\; \nabla \hat V(x)^\top f(x) \;-\; \tfrac{1}{2}\nabla \hat V(x)^\top g(x) D^{-1} g(x)^\top \nabla \hat V(x) \;+\; q(x) \;\geq\; -\varepsilon
\label{eq:residual}
\end{equation}
holds for all $x\in\Omega$. Let $x_{\hat u}(\cdot;x_0)$ denote the closed-loop trajectory of $\dot x = f(x)+g(x)\hat u(x)$ starting at $x_0\in\Omega$, and suppose $x_{\hat u}(t;x_0)\in\Omega$ for all $t\in[0,\tau]$ for some $\tau>0$. Then
\begin{equation}
\int_0^\tau \!\!\Bigl[\, q\bigl(x_{\hat u}(t)\bigr) \;+\; \tfrac{1}{2}\,\hat u\bigl(x_{\hat u}(t)\bigr)^\top D\,\hat u\bigl(x_{\hat u}(t)\bigr) \,\Bigr] \, dt
\;\leq\; \hat V(x_0) \;-\; \hat V\bigl(x_{\hat u}(\tau)\bigr) \;+\; \varepsilon\,\tau.
\label{eq:trajectory_finite_time}
\end{equation}
In particular, if the closed-loop trajectory under $\hat u$ remains in $\Omega$ for all $t\geq 0$, satisfies $x_{\hat u}(t;x_0)\to 0$, and $\hat V\bigl(x_{\hat u}(t;x_0)\bigr)\to 0$ as $t\to\infty$, then
\begin{equation}
J(x_0;\hat u) \;\leq\; \hat V(x_0) \;+\; \varepsilon\, T_{\hat u}(x_0),
\label{eq:trajectory_inf_time}
\end{equation}
where $T_{\hat u}(x_0)$ denotes the time interval over which the residual estimate is integrated along the closed-loop trajectory under $\hat u$.
\end{theorem}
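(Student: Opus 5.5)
The approach is to differentiate $\hat V$ along the closed-loop trajectory and rewrite the derivative, via the residual identity~\eqref{eq:residual}, as the negative of the running cost plus the residual. Write $x(t)=x_{\hat u}(t;x_0)$ and $\hat u(t)=\hat u(x(t))$. Since $\hat V\in C^1(\Omega)$ and $x(t)\in\Omega$ on $[0,\tau]$, the map $t\mapsto\hat V(x(t))$ is $C^1$, and
\[
\tfrac{d}{dt}\hat V(x(t)) = \nabla\hat V^\top f - \nabla\hat V^\top g D^{-1}g^\top\nabla\hat V .
\]
Here we substituted the feedback $\hat u=-D^{-1}g^\top\nabla\hat V$. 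We also note that $\tfrac12\hat u^\top D\hat u=\tfrac12\nabla\hat V^\top gD^{-1}g^\top\nabla\hat V$. Combining the two gives the pointwise identity
\[
\tfrac{d}{dt}\hat V(x(t)) = R_{\hat V}(x(t)) - q(x(t)) - \tfrac12\hat u(t)^\top D\hat u(t).
\]

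Next, using the hypothesis $R_{\hat V}\ge-\varepsilon$ on $\Omega$, the identity gives $q+\tfrac12\hat u^\top D\hat u = R_{\hat V} - \tfrac{d}{dt}\hat V \ge -\varepsilon-\tfrac{d}{dt}\hat V$. This is the wrong direction for an upper bound on the cost, so this step is the main obstacle. The inequality needed is $q+\tfrac12\hat u^\top D\hat u\le -\tfrac{d}{dt}\hat V+\varepsilon$, which requires $R_{\hat V}\le\varepsilon$. Since $\varepsilon$ in the paper is defined as $\max_\Omega|R|$, I would use the two-sided bound $|R_{\hat V}|\le\varepsilon$; equivalently, I would read the hypothesis as the upper bound $R_{\hat V}(x)\le\varepsilon$. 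I would state this explicitly, noting that the lower bound alone gives the reverse (lower) estimate $J\ge\hat V(x_0)-\varepsilon T$. With $R_{\hat V}\le\varepsilon$, integrating over $[0,\tau]$ by the fundamental theorem of calculus yields~\eqref{eq:trajectory_finite_time}.

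For the infinite-horizon statement, the integrand $q+\tfrac12\hat u^\top D\hat u$ is nonnegative, so the truncated costs increase monotonically in $\tau$. Letting $\tau\to\infty$ and using $\hat V(x(\tau))\to0$ gives $J(x_0;\hat u)\le\hat V(x_0)+\varepsilon\,T_{\hat u}(x_0)$. The second difficulty appears here: $\varepsilon\tau$ diverges unless the residual is integrated only over a finite effective time. I would handle this by interpreting $T_{\hat u}(x_0)$ as the measure of the set of times at which the residual bound is active. Concretely, I would define $T_{\hat u}(x_0):=\int_0^\infty \varepsilon^{-1}\max(R_{\hat V}(x(t)),0)\,dt$, or alternatively use the bound $R_{\hat V}(x)\le\varepsilon\,\min(1,\|x\|^2/r^2)$. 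The latter is justified near the origin because the Hessian constraint forces $R_{\hat V}=O(\|x\|^3)$ there. The integral is then finite by the exponential decay of $x(t)$. With this definition the limit passage is routine, because the nonnegative residual integral converges monotonically.
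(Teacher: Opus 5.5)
Your derivation of the pointwise identity $\frac{d}{dt}\hat V(x(t))=R_{\hat V}(x(t))-q(x(t))-\tfrac12\hat u^\top D\hat u$ is exactly the paper's. Your diagnosis of the next step is also correct, and that step is where the paper's own proof breaks. The paper uses $R_{\hat V}\ge-\varepsilon$ to write $\frac{d}{dt}\hat V\ge-\varepsilon-q-\tfrac12\hat u^\top D\hat u$. It then says that integrating and rearranging gives \eqref{eq:trajectory_finite_time}. In fact integrating gives
\[
\int_0^\tau\bigl(q+\tfrac12\hat u^\top D\hat u\bigr)\,dt\;\ge\;\hat V(x_0)-\hat V(x(\tau))-\varepsilon\tau ,
\]
which is the lower estimate you identified.

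The statement is false as written, not merely unproved. Take $n=m=1$, $f\equiv0$, $g\equiv1$, $D=1$, $q(x)=x^2$ and $\hat V(x)=\tfrac12x^2$.
\begin{itemize}
\item Then $R_{\hat V}(x)=\tfrac12x^2\ge0$, so the hypothesis holds with $\varepsilon=0$ on any interval $\Omega\ni0$.
\item The feedback is $\hat u=-x$, so $x(t)=x_0e^{-t}$.
\item The left side of \eqref{eq:trajectory_finite_time} equals $\tfrac34x_0^2(1-e^{-2\tau})$, while the right side equals $\tfrac12x_0^2(1-e^{-2\tau})$.
\end{itemize}
So your repair is the right one: assume the one-sided bound $R_{\hat V}\le\varepsilon$ on $\Omega$, i.e.\ the paper's $\varepsilon_{\mathrm{exc}}$ rather than $\varepsilon_{\mathrm{viol}}$. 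Under it your integration argument is complete.

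It is worth recording what the original hypothesis does give. Completing the square in $u$, $R_{\hat V}\ge0$ implies $\frac{d}{dt}\hat V\ge-(q+\tfrac12u^\top Du)$ along \emph{every} control. Hence $\hat V(x_0)\le J(x_0,u)$ for every control whose trajectory stays in $\Omega$ and drives $\hat V$ to zero, so $\hat V\le V^*$ whenever near-optimal trajectories stay in $\Omega$. In the example, $V^*=\tfrac{\sqrt2}{2}x^2>\hat V$. Combined with your repaired inequality, this yields $V^*(x_0)\le J(x_0;\hat u)\le V^*(x_0)+\varepsilon_{\mathrm{exc}}\,T_{\hat u}(x_0)$. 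That is the bound the paper is after, but the comparison between $\hat V$ and $V^*$ runs opposite to Proposition~\ref{prop:supersolution}.

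For the infinite-horizon part you are again right that $\varepsilon\tau\to\infty$. The paper's proof simply lets $\tau\to\infty$ and never defines $T_{\hat u}(x_0)$, so it offers nothing better.

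Your first choice, $T_{\hat u}(x_0)=\varepsilon^{-1}\int_0^\infty\max(R_{\hat V}(x(t)),0)\,dt$, is valid but nearly tautological. Under the stated convergence hypotheses your identity yields exactly $J(x_0;\hat u)=\hat V(x_0)+\int_0^\infty R_{\hat V}(x(t))\,dt$ as an improper integral. So this bound needs no sign hypothesis on $R_{\hat V}$ at all. Also, this $T$ depends on $\hat V$, not only on the problem data as the abstract claims.

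Your second choice, $R_{\hat V}\le\varepsilon\min(1,\|x\|^2/r^2)$, does give an informative finite $T$. However, it relies on three things that are not hypotheses of this theorem:
\begin{itemize}
\item the Riccati--Hessian equality;
\item $C^2$ or $C^3$ regularity of $\hat V$;
\item exponential decay of $x(t)$.
\end{itemize}
Present it as a corollary under those added assumptions rather than as part of the proof.
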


\begin{proof}
Along the closed-loop trajectory $x(t):=x_{\hat u}(t;x_0)$,
\[
\frac{d}{dt}\hat V(x(t)) \;=\; \nabla\hat V(x(t))^\top\bigl(f(x(t)) + g(x(t))\hat u(x(t))\bigr).
\]
Substituting the definition $\hat u(x) = -D^{-1}g(x)^\top \nabla\hat V(x)$ gives
\[
\frac{d}{dt}\hat V(x(t)) \;=\; \nabla\hat V(x)^\top f(x) \;-\; \nabla\hat V(x)^\top g(x) D^{-1} g(x)^\top \nabla\hat V(x).
\]
Using the identity $\tfrac12 \hat u(x)^\top D\,\hat u(x) = \tfrac12 \nabla\hat V(x)^\top g(x) D^{-1} g(x)^\top \nabla\hat V(x)$ and the definition of $R_{\hat V}$, this rearranges to the pointwise identity
\[
\frac{d}{dt}\hat V(x(t)) \;=\; R_{\hat V}(x(t)) \;-\; q(x(t)) \;-\; \tfrac12 \hat u(x(t))^\top D\,\hat u(x(t)).
\]
The residual hypothesis $R_{\hat V}\geq -\varepsilon$ yields
\[
\frac{d}{dt}\hat V(x(t)) \;\geq\; -\varepsilon \;-\; q(x(t)) \;-\; \tfrac12 \hat u(x(t))^\top D\,\hat u(x(t)).
\]
Integrating over $[0,\tau]$ and rearranging gives \eqref{eq:trajectory_finite_time}. The infinite-horizon statement follows by letting $\tau\to\infty$ under the stated convergence hypotheses on the trajectory and on $\hat V$ along it; the running cost integral converges to $J(x_0;\hat u)$ by definition.
\end{proof}

\begin{proposition}[Comparison principle: from $\hat V$ to $V^*$]
\label{prop:supersolution}
Suppose $\hat V \in C^1(\bar\Omega)$ satisfies the HJB inequality $R_{\hat V}(x) \geq 0$ on $\Omega$ (i.e.\ $\varepsilon = 0$ in \eqref{eq:residual}, so $\hat V$ is a classical supersolution of the HJB equation), $\hat V(0) = 0$, and $\hat V \geq V^*$ on $\partial \Omega$. Suppose moreover that the comparison principle holds for the HJB equation on $\Omega$ in the viscosity sense; this is the case under standard regularity and boundary conditions, see e.g.\ Bardi and Capuzzo-Dolcetta~\cite{bardi_dolcetta}, Chapter~II. Then
\begin{equation}
V^*(x) \;\leq\; \hat V(x) \qquad \forall x \in \bar\Omega.
\label{eq:comparison}
\end{equation}
\end{proposition}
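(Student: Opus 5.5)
The plan is to reduce the statement to the assumed comparison principle: show that $\hat V$ is a viscosity supersolution and $V^*$ a viscosity subsolution of the same stationary HJB equation on $\Omega$, with the boundary data ordered correctly. First we fix the sign convention. We write the HJB equation \eqref{eq:hjb_equality} as $H(x,\nabla V)=0$ with
\[
H(x,p) := -p^\top f(x) + \tfrac12 p^\top g(x)D^{-1}g(x)^\top p - q(x) = -\min_{u}\bigl\{p^\top(f(x)+g(x)u) + q(x) + \tfrac12 u^\top D u\bigr\}.
\]
This is the form in which Bardi--Capuzzo-Dolcetta state comparison for infinite-horizon problems. With this convention, $R_{\hat V}\ge 0$ reads $H(x,\nabla\hat V(x))\le 0$. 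We must check carefully whether the reference's comparison principle treats this inequality as the sub- or the super-solution side. The key observation is that the hypothesis $\hat V\ge V^*$ on $\partial\Omega$ forces $\hat V$ to play the role of the function that is larger on the boundary. So we will invoke comparison in the orientation ``subsolution $\le$ supersolution'', with $V^*$ as the subsolution and $\hat V$ as the supersolution. We will align the sign of $H$ (multiplying by $-1$ if necessary, which swaps sub and super) so that $R_{\hat V}\ge 0$ is exactly the supersolution inequality. The paper's wording (``classical supersolution'') fixes this choice.

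Step one: a $C^1$ function satisfying the differential inequality pointwise is a viscosity supersolution. Indeed, at any point where a smooth test function $\phi$ touches $\hat V$ from below, we have $\nabla\phi=\nabla\hat V$, so the inequality transfers directly. Step two: $V^*$ is a viscosity solution of the HJB equation on $\Omega$, hence in particular a subsolution. This is the standard dynamic-programming result (Bardi--Capuzzo-Dolcetta, Ch.~III), which we quote under the standing Assumption~\ref{ass:standing} and the stated regularity. Step three: apply the assumed comparison principle on the bounded domain $\Omega$. The boundary inequality $V^*\le\hat V$ on $\partial\Omega$ and the interior sub/super relations give $V^*\le\hat V$ in $\Omega$, and continuity extends this to $\bar\Omega$.

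The main obstacle is the equilibrium at the origin. The Hamiltonian is not strictly monotone in $V$ (there is no discount term), and comparison principles for such undiscounted equations typically fail without extra structure, since adding constants preserves solutions. We handle this by including the origin as an additional ``boundary'' point: both functions vanish there ($\hat V(0)=0=V^*(0)$). We then apply comparison on $\Omega\setminus\{0\}$, or in Bardi--Capuzzo-Dolcetta's exit-time/state-constraint formulation with target $\{0\}$. If the quoted comparison result needs strictness, we perturb to $\hat V_\delta=(1+\delta)\hat V$. By convexity of the Hamiltonian in $p$, together with $q>0$ away from $0$, this perturbed function becomes a strict supersolution. We then compare and let $\delta\to0$. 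This perturbation step is where the argument could break, so we would first check it under the explicit hypothesis in the proposition that comparison ``holds'', which we simply invoke.

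A secondary check, in case the reference's comparison principle is only stated in the opposite orientation: the direct trajectory route does not reach the stated conclusion. Along an optimal or near-optimal trajectory, $R_{\hat V}\ge 0$ gives $\frac{d}{dt}\hat V\ge -q-\frac12u^\top Du$, hence $\hat V(x_0)\le J(x_0;u)+\hat V(x(\tau))$, which bounds $\hat V$ above by $V^*$ rather than below. So the trajectory route cannot substitute for comparison, and the proof rests entirely on the cited comparison principle.
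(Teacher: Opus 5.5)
Your reduction is the same as the paper's: its proof is a one-line appeal to the comparison theorem of Bardi--Capuzzo-Dolcetta, calling $\hat V$ a supersolution and $V^*$ a solution. However, the step where you ``align the sign of $H$'' is a genuine gap, and the paper's one-liner does not close it either.

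\textbf{The orientation problem.} In the convention you write down, $H(x,p)=-\min_u\{\cdots\}$ is convex in $p$, and the dynamic-programming results you cite make $V^*$ a viscosity solution of $H(x,DV)=0$. In that convention $R_{\hat V}\ge 0$ reads $H(x,\nabla\hat V)\le 0$, so $\hat V$ is a \emph{sub}solution. Comparison in this orientation needs $\hat V\le V^*$ on the boundary and returns $\hat V\le V^*$, which is exactly what your own trajectory check gives. Under the stated hypothesis $\hat V\ge V^*$ on $\partial\Omega$ it says nothing.

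Multiplying by $-1$ does not swap sub- and supersolutions in the viscosity sense. A subsolution of $H=0$ is tested at maxima of $u-\phi$, while a supersolution of $-H=0$ is tested at minima. For the $C^1$ function $\hat V$ this is harmless. But your Step two then needs $V^*$ to be a viscosity subsolution of $-H=0$, i.e.\ $H(x,p)\ge 0$ for every $p\in D^+V^*(x)$. That fails at the concave kinks typical of value functions. There $D^+V^*(x)$ is the segment between the one-sided gradients $p_1,p_2$, which satisfy $H(x,p_i)=0$. At its midpoint $H=-\tfrac18|D^{-1/2}g(x)^\top(p_1-p_2)|^2<0$ whenever $g(x)^\top(p_1-p_2)\neq 0$. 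So Step two needs the extra hypothesis $V^*\in C^1$, which the paper assumes tacitly through $u^*=-D^{-1}g^\top\nabla V^*$. Even then, the black-box comparison you invoke is for the concave form, not the textbook one.

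\textbf{The strictness fallback.} This also goes the wrong way. Set $F:=-H$, so that $R_V=F(x,\nabla V)$. Then $F(x,(1+\delta)p)=(1+\delta)F(x,p)-\delta q(x)-\tfrac12\delta(1+\delta)\,p^\top gD^{-1}g^\top p$, so $(1+\delta)\hat V$ is in general not even a supersolution. For example, with $f=0$, $g=D=1$, $q=\tfrac12x^2$ and $\hat V=V^*=\tfrac12x^2$, one gets $R_{(1+\delta)\hat V}=-\tfrac12\bigl((1+\delta)^2-1\bigr)x^2<0$. Convexity of $H$ together with the strict subsolution $0$ strengthens \emph{sub}solutions by shrinking them ($\theta\hat V$ with $\theta<1$). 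That is again the orientation yielding $\hat V\le V^*$.

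\textbf{What does prove the statement.} Assume $V^*\in C^1$ with optimal feedback $u^*$, and $\bar\Omega$ a compact subset of the domain of attraction of the optimal closed loop. Then a trajectory argument works, provided it starts on $\partial\Omega$ rather than at $x$.
\begin{itemize}
\item The hypothesis $R_{\hat V}\ge 0$ bounds $\nabla\hat V^\top(f+gu)+q+\tfrac12u^\top Du$ from below for \emph{every} $u$. Along optimal trajectories, $V^*$ decreases at exactly the running-cost rate. Hence $t\mapsto(\hat V-V^*)(x^*(t))$ is nondecreasing along every optimal closed-loop trajectory $x^*(\cdot)$.
\item The backward optimal trajectory from $x\in\Omega\setminus\{0\}$ must leave $\bar\Omega$. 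Along it $V^*$ increases strictly, and an $\alpha$-limit set inside $\bar\Omega$ would be invariant with $q=0$ on it, hence equal to $\{0\}$. This contradicts $V^*\ge V^*(x)>0$ on that set.
\item Let $y$ be the first point of this backward trajectory on $\partial\Omega$. Monotonicity gives $\hat V(x)-V^*(x)\ge\hat V(y)-V^*(y)\ge 0$.
\end{itemize}
So the trajectory route is not useless; you started it at the wrong point.

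Suppose in addition that $\bar\Omega$ is forward invariant, e.g.\ a sublevel set of $V^*$. Running the same monotonicity forward to the origin and using $\hat V(0)=V^*(0)=0$ gives $\hat V\le V^*$. The hypotheses of the proposition therefore force $\hat V\equiv V^*$.
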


\begin{proof}
$V^*$ is the continuous viscosity solution of the HJB equation with the appropriate boundary data; $\hat V$ is a $C^1$ classical supersolution with $\hat V \geq V^*$ on $\partial \Omega$. The standard comparison principle for first-order HJB equations of this form (\cite{bardi_dolcetta}, Chapter~II, Theorem 4.3 and its corollaries) gives \eqref{eq:comparison}.
\end{proof}

\begin{corollary}[Suboptimality bound]
\label{cor:control_suboptimality}
Assume the hypotheses of Theorem~\ref{thm:trajectory_inequality} together with the supersolution and comparison-principle hypotheses of Proposition~\ref{prop:supersolution} (the latter applied either to $\hat V$ if $\varepsilon=0$, or to a $\hat V$ whose $\varepsilon$-supersolution property is preserved on $\bar\Omega$). Then
\begin{equation}
J(x_0;\hat u) \;-\; V^*(x_0) \;\leq\; \bigl(\hat V(x_0) - V^*(x_0)\bigr) \;+\; \varepsilon\, T_{\hat u}(x_0).
\label{eq:cost_subopt}
\end{equation}
In particular, when $\hat V \geq V^*$ on $\bar\Omega$ (which holds for $\varepsilon=0$ under Proposition~\ref{prop:supersolution}, and for small $\varepsilon$ under the Riccati-Hessian equality constraint that pins $\hat V$ quadratically above $V^*$ near the origin),
\begin{equation}
J(x_0;\hat u) \;-\; V^*(x_0) \;\leq\; \varepsilon\, T_{\hat u}(x_0).
\label{eq:control_suboptimality}
\end{equation}
\end{corollary}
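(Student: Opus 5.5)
The plan is to derive the corollary directly from Theorem~\ref{thm:trajectory_inequality} and then use Proposition~\ref{prop:supersolution} for the second inequality.

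\textbf{Step 1 (first inequality).} Under the hypotheses of Theorem~\ref{thm:trajectory_inequality}, the closed-loop trajectory stays in $\Omega$, converges to the origin, and $\hat V(x_{\hat u}(t;x_0))\to 0$. Hence \eqref{eq:trajectory_inf_time} holds:
\[
J(x_0;\hat u)\le \hat V(x_0)+\varepsilon\,T_{\hat u}(x_0).
\]
Subtracting $V^*(x_0)$ from both sides gives \eqref{eq:cost_subopt}. This is purely algebraic and needs no comparison hypothesis. I would also note that $V^*(x_0)\le J(x_0;\hat u)$, since $V^*$ is an infimum over controls, so the left-hand side is nonnegative and the bound is meaningful.

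\textbf{Step 2 (second inequality).} The goal is $\hat V(x_0)-V^*(x_0)\le 0$, after which \eqref{eq:control_suboptimality} follows immediately from \eqref{eq:cost_subopt}.

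In the case $\varepsilon=0$, $\hat V$ is a classical $C^1$ supersolution with $\hat V(0)=0$ and $\hat V\ge V^*$ on $\partial\Omega$. Proposition~\ref{prop:supersolution} then gives $V^*\le\hat V$ on $\bar\Omega$. Combined with the trajectory bound, $J(x_0;\hat u)-V^*(x_0)\le \hat V(x_0)-V^*(x_0)$.

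This inequality points the wrong way for showing the gap is nonpositive. What the argument actually needs is $\hat V(x_0)\le V^*(x_0)$, which would give $\hat V = V^*$ once combined with comparison. I would therefore read the corollary's phrase ``$\hat V\ge V^*$'' as the working hypothesis under which the term $\hat V(x_0)-V^*(x_0)$ is controlled. Concretely, I would state \eqref{eq:control_suboptimality} under the hypothesis that $\hat V(x_0)\le V^*(x_0)$, or that the two coincide. In the $\varepsilon=0$ case this is consistent: Theorem~\ref{thm:trajectory_inequality} gives $V^*(x_0)\le J\le \hat V(x_0)$, so if additionally $\hat V\le V^*$ then $J=V^*=\hat V$ and the gap is zero.

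\textbf{Step 3 (small $\varepsilon$).} For $\varepsilon>0$, I would invoke the comparison principle for $\hat V$ as an $\varepsilon$-supersolution. The natural route is to perturb to $\hat V_\varepsilon(x):=\hat V(x)+\varepsilon\,\tau(x)$, where $\tau$ is a time-to-reach function whose derivative along the flow equals $-1$. This makes $\hat V_\varepsilon$ an exact supersolution, and $\hat V_\varepsilon\ge V^*$ then follows from Proposition~\ref{prop:supersolution}.

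\textbf{Main obstacle.} The hard part is this sign bookkeeping: the comparison principle yields $V^*\le\hat V$, whereas the desired conclusion needs the opposite bound. I would handle this by making the hypothesis $\hat V(x_0)-V^*(x_0)\le 0$ explicit at $x_0$. The Riccati-Hessian equality $\nabla^2\hat V(0)=P=\nabla^2V^*(0)$ only guarantees $\hat V-V^*=O(\|x\|^3)$ near the origin, so it cannot supply the sign globally. The corollary should therefore be stated with that hypothesis carried as an assumption rather than derived.
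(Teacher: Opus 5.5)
Your Step 1 is the same as the paper's argument: the first inequality is $J(x_0;\hat u)\le\hat V(x_0)+\varepsilon\,T_{\hat u}(x_0)$ with $V^*(x_0)$ subtracted from both sides. The paper also cites $V^*(x_0)\le J(x_0;\hat u)$ and the comparison proposition, but neither is needed for that step.

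Your objection to the second part is correct, and the flaw is in the paper's proof, not yours. The paper says that when $\hat V\ge V^*$ ``the first term on the right is at most zero''. But $\hat V\ge V^*$ makes $\hat V(x_0)-V^*(x_0)\ge 0$, so that term cannot be dropped. It drops exactly under $\hat V(x_0)\le V^*(x_0)$, which is the hypothesis you propose. As you note, the Riccati--Hessian equality only fixes the second-order jet of $\hat V$ at the origin and gives no sign information, so it cannot rescue the stated version.

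Your write-up needs two repairs.

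\textbf{Drop Step 3.} If $\nabla\tau^\top(f+g\hat u)=-1$, then $R_{\hat V+\varepsilon\tau}=R_{\hat V}-\varepsilon-\tfrac{\varepsilon^2}{2}\nabla\tau^\top gD^{-1}g^\top\nabla\tau$. So the perturbation lowers the residual instead of restoring $R\ge 0$. A time-to-reach-the-origin function is also infinite under exponential convergence. In any case it would only give $V^*\le\hat V_\varepsilon$, the direction you already recognised as useless.

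\textbf{Do not rely on the trajectory theorem's direction.} Your remark that for $\varepsilon=0$ one gets $V^*(x_0)\le J(x_0;\hat u)\le\hat V(x_0)$, which makes your repaired statement nearly vacuous, depends on that theorem, and its direction is also flipped. Integrate the exact closed-loop identity $q+\tfrac12\hat u^\top D\hat u=R_{\hat V}-\tfrac{d}{dt}\hat V$. This gives $\int_0^\tau(q+\tfrac12\hat u^\top D\hat u)\,dt=\hat V(x_0)-\hat V(x(\tau))+\int_0^\tau R_{\hat V}\,dt$. So the upper bound needs $R_{\hat V}\le\varepsilon$; the hypothesis $R_{\hat V}\ge-\varepsilon$ gives the reverse inequality.

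The degree-2 example of Section~\ref{sec:1d_lossy} shows this concretely. There $\hat V=2x^2$ has $R_{\hat V}=6x^4\ge 0$. Yet $J(1.2;\hat u)=9\ln(3/1.56)-1.44\approx 4.445$, which exceeds $\hat V(1.2)=2.88$.

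Conversely, $R_{\hat V}\ge 0$ is exactly what gives your hypothesis. For every $u$, $\nabla\hat V^\top(f+gu)+q+\tfrac12u^\top Du\ge R_{\hat V}$. Hence any admissible trajectory that stays in $\Omega$ and tends to the origin has $J(x_0;u)\ge\hat V(x_0)$, so $\hat V\le V^*$. This mirrors the classical fact that solutions of the convex Riccati inequality lie below the stabilising ARE solution.

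A consistent corrected corollary is therefore the following. Suppose $0\le R_{\hat V}\le\varepsilon$ on $\Omega$, and the relevant (near-)optimal and closed-loop trajectories from $x_0$ stay in $\Omega$ and converge. Then $J(x_0;\hat u)-V^*(x_0)\le\varepsilon\,T_{\hat u}(x_0)$, and no comparison principle is needed.
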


\begin{proof}
\eqref{eq:cost_subopt} follows by combining $J(x_0;\hat u) \leq \hat V(x_0) + \varepsilon\, T_{\hat u}(x_0)$ from \eqref{eq:trajectory_inf_time} with $V^*(x_0) \leq J(x_0;\hat u)$ and Proposition~\ref{prop:supersolution}. When $\hat V \geq V^*$ pointwise, the first term on the right of \eqref{eq:cost_subopt} is at most zero after subtracting $V^*$ on both sides, giving \eqref{eq:control_suboptimality}.
\end{proof}

\begin{remark}[Why the rewrite]
\label{rem:rewrite_subopt}
Earlier versions of this paper stated $0 \le \hat V(x_0) - V^*(x_0) \le \varepsilon\, T(x_0)$ with $T(x_0)$ defined via the \emph{optimal} closed-loop trajectory, while the proof integrated along the \emph{approximate} closed-loop trajectory; these are not the same. The decomposition above states only what the trajectory calculation actually proves (Theorem~\ref{thm:trajectory_inequality}), with $T_{\hat u}(x_0)$ being the closed-loop integration time under $\hat u$, and then bridges to $V^*$ via the comparison principle (Proposition~\ref{prop:supersolution}). Under the supersolution hypothesis the clean bound \eqref{eq:control_suboptimality} is recovered.
\end{remark}

\begin{remark}[Small versus large $\varepsilon$]
\label{rem:eps_regimes}
The bound~\eqref{eq:control_suboptimality} makes the role of $\varepsilon$ explicit. When $\varepsilon$ is small (say $\varepsilon\,T_{\hat u}(x_0) \ll V^*(x_0)$), the closed-loop cost is within a small absolute gap of the optimum and the method should be viewed as a near-optimal solver of the HJB \emph{equation}. When $\varepsilon$ is large, the method should be viewed as a stabiliser obtained from a relaxed HJB \emph{inequality}; stability is still guaranteed by Theorem~\ref{thm:stability}, but optimality is not. Every numerical example in Section~\ref{sec:numerical} reports
\[
\varepsilon_{\mathrm{HJB}} := \max_{x\in\Omega}|R(x)|,\qquad
\varepsilon_{\mathrm{viol}} := \max_{x\in\Omega}\max(0,-R(x)),\qquad
\varepsilon_{\mathrm{exc}} := \max_{x\in\Omega}\max(0,R(x)),
\]
together with the suboptimality gap $J(x_0;\hat u) - V^*(x_0)$ on a grid of initial conditions, so that the regime is clear.
\end{remark}

\begin{remark}[Bound on $T_{\hat u}(x_0)$]
By Theorem~\ref{thm:stability} the closed-loop trajectory under $\hat u$ satisfies $\|x_{\hat u}(t;x_0)\| \leq \alpha\|x_0\|e^{-\beta t}$ on a neighbourhood of the origin, and so the time required to reach any ball of radius $r_0$ is bounded by $\beta^{-1}\log(\alpha\|x_0\|/r_0)$. This bound depends on the closed-loop dynamics under $\hat u$ (rather than the \emph{optimal} closed-loop dynamics, as earlier versions claimed); for $\hat V$ close to $V^*$ the two are close, and for the experiments in Section~\ref{sec:numerical} they are indistinguishable to numerical precision.
\end{remark}

\begin{theorem}[Local Exponential Stability]
\label{thm:stability}
Suppose $\hat{V}\in C^2$ satisfies:
\begin{enumerate}
\item The HJB inequality \eqref{eq:hjb} (or its LMI equivalent \eqref{eq:lmi}) on a neighbourhood of the origin;
\item The Riccati Hessian constraint $\nabla^2 \hat{V}(0) = P \succ 0$;
\item The equilibrium constraints $\hat{V}(0) = 0$ and $\nabla \hat{V}(0) = 0$.
\end{enumerate}
Suppose further that $q$ is \emph{locally quadratically positive definite}, i.e.\ there exist constants $c_q > 0$ and $r_q > 0$ such that
\begin{equation}
q(x) \;\geq\; c_q \|x\|^2 \qquad \text{for all } \|x\| < r_q.
\label{eq:local_q_pd}
\end{equation}
Then there exist constants $\alpha, \beta, r > 0$ such that for all $x_0$ with $\|x_0\| < r$, the closed-loop trajectory under $\hat{u}(x) = -D^{-1}g(x)^\top \nabla \hat{V}(x)$ satisfies
\begin{equation}
\|x(t)\| \leq \alpha \|x_0\| e^{-\beta t}, \quad \forall t \geq 0,
\label{eq:exp_stability}
\end{equation}
with $\beta \geq c_q / \lambda_{\max}(P)$ asymptotically near the origin.
\end{theorem}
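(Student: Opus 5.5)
We will use $\hat V$ itself as a Lyapunov function for the closed loop $\dot x = f(x)+g(x)\hat u(x)$, sandwich it between two quadratics near the origin, and then apply the standard comparison argument for exponential decay.

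\emph{Step 1: quadratic sandwich.} By hypothesis~3, $\hat V(0)=0$ and $\nabla\hat V(0)=0$. By hypothesis~2 and $\hat V\in C^2$, Taylor's theorem gives $\hat V(x)=\tfrac12 x^\top P x + o(\|x\|^2)$. Since $P\succ 0$, for any $\delta\in(0,1)$ there is $r_1>0$ such that
\[
\tfrac{1-\delta}{2}\lambda_{\min}(P)\|x\|^2 \le \hat V(x) \le \tfrac{1+\delta}{2}\lambda_{\max}(P)\|x\|^2
\qquad\text{for } \|x\|<r_1 .
\]

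\emph{Step 2: dissipation inequality.} The proof of Theorem~\ref{thm:trajectory_inequality} gives the pointwise identity
\[
\dot{\hat V} = R_{\hat V} - q - \tfrac12 \hat u^\top D\hat u .
\]
Here we need $R_{\hat V}\le 0$ on a neighbourhood of the origin, so hypothesis~1 has to be read as the subsolution direction of Remark~\ref{rem:convexity_asymmetry}, i.e.\ the $\le$ HJB inequality. I expect this sign issue to be the main obstacle. With only the supersolution inequality $R_{\hat V}\ge 0$ that the SDP enforces, the identity gives no upper bound on $\dot{\hat V}$. My first attempt to get around it is to use the Riccati constraint directly. Expanding $R_{\hat V}$ as in the proof of Proposition~\ref{thm:riccati_matching}, with $\nabla\hat V(x)=Px+O(\|x\|^2)$, the quadratic part is $\tfrac12 x^\top(A^\top P+PA-PBD^{-1}B^\top P+Q)x=0$ by~\eqref{eq:are}, so $R_{\hat V}(x)=o(\|x\|^2)$. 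Then for $\|x\|<r_2\le\min(r_1,r_q)$, using~\eqref{eq:local_q_pd},
\[
\dot{\hat V} \le -q(x) + o(\|x\|^2) \le -(1-\delta)\,c_q\|x\|^2 .
\]
This shows the quadratic-order cancellation, not hypothesis~1, is what really drives the argument. I would therefore state explicitly that the proof uses hypothesis~1 only to the extent that it is consistent with this expansion, or else invoke the $\le 0$ version.

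\emph{Step 3: exponential decay.} Combining Steps 1 and 2,
\[
\dot{\hat V} \le -\frac{2(1-\delta)c_q}{(1+\delta)\lambda_{\max}(P)}\,\hat V .
\]
Choose $r$ small enough that the sublevel set $\{\hat V\le c\}$ containing $B_r$ lies inside $B_{r_2}$; this makes the set forward invariant, since $\hat V$ is nonincreasing there. Grönwall's inequality then gives $\hat V(x(t))\le \hat V(x_0)e^{-2\beta' t}$ with $\beta'=(1-\delta)c_q/\bigl((1+\delta)\lambda_{\max}(P)\bigr)$. Converting back through Step 1 yields~\eqref{eq:exp_stability} with $\alpha=\sqrt{(1+\delta)\lambda_{\max}(P)/\bigl((1-\delta)\lambda_{\min}(P)\bigr)}$ and $\beta=\beta'$. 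Letting $\delta\to 0$ as $r\to 0$ gives $\beta\to c_q/\lambda_{\max}(P)$, which is the asymptotic rate claimed in the statement.
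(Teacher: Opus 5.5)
Your proof is correct, and it departs from the paper's at exactly the step you flagged. Steps~1 and~3 match the paper: the same quadratic sandwich and the same Gr\"onwall conversion, giving the same limiting rate $c_q/\lambda_{\max}(P)$.

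For the decrease, the paper writes $\dot{\hat V}=R_{\hat V}-q-\tfrac12\nabla\hat V^\top gD^{-1}g^\top\nabla\hat V$ and then invokes $R_{\hat V}\ge 0$ to conclude $\dot{\hat V}\le -q$. That inequality actually requires $R_{\hat V}\le 0$, so the paper's Step~2 has the sign backwards. Its proof also never uses the fact that $P$ solves the ARE. Your cancellation argument is what actually proves the statement: $C^2$ regularity plus \eqref{eq:are} give $R_{\hat V}=o(\|x\|^2)$, and $q\ge c_q\|x\|^2$ absorbs this remainder. It also shows that hypothesis~1 is not needed at all. So drop the hedging about using hypothesis~1 ``to the extent that it is consistent'' or switching to the $\le$ version, and say plainly that it is unused.

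Two small fixes:
\begin{enumerate}
\item With $\hat V$ only $C^2$ you get $\nabla\hat V(x)=Px+o(\|x\|)$, not $O(\|x\|^2)$. The conclusion $R_{\hat V}=o(\|x\|^2)$ is unaffected.
\item The forward-invariant set in Step~3 should be $\{x\in B_{r_2}:\hat V(x)\le c\}$ with $c<\tfrac{1-\delta}{2}\lambda_{\min}(P)r_2^2$. This invariance step is one the paper omits entirely.
\end{enumerate}

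As for what each route buys: a genuine subsolution hypothesis would give $\dot{\hat V}\le -q$ on the whole neighbourhood where it holds. That yields a quantifiable region of attraction and needs no appeal to the ARE. Your argument yields only the small ball dictated by the Taylor remainder, but it works with the supersolution constraint the SDP actually imposes, or with none.
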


\begin{remark}[When does \eqref{eq:local_q_pd} hold?]
\label{rem:local_pd}
The hypothesis $q(x)\geq c_q\|x\|^2$ near the origin is \emph{stronger} than the standing assumption $q(x)>0$ for $x\neq 0$ and the detectability of $(A,Q^{1/2})$. There are two natural ways to ensure it:
\begin{enumerate}
\item \textbf{Direct hypothesis $Q \succ 0$.} If $Q = \nabla^2 q(0) \succ 0$ (strictly positive definite), Taylor expansion gives $q(x) = \tfrac12 x^\top Q x + O(\|x\|^3) \geq c_q\|x\|^2$ for $c_q < \tfrac12\lambda_{\min}(Q)$ and $\|x\|$ small enough. This is the simplest sufficient condition and holds in every example of Section~\ref{sec:numerical}: the 1D benchmark has $Q = q''(0) = 8 > 0$ and Van der Pol has $Q = 2I_2 \succ 0$.
\item \textbf{Detectability of $(A, Q^{1/2})$.} When $Q$ is only positive semidefinite but $(A, Q^{1/2})$ is detectable, the Riccati solution $P\succ 0$ still exists, and one can construct an alternative Lyapunov candidate from the LQR Lyapunov equation that achieves local exponential decay; this is the standard argument in linear-quadratic regulator theory (see e.g.\ Anderson \& Moore~\cite{anderson_moore}, Chapter~3). In this case the role of \eqref{eq:local_q_pd} is taken over by the dissipation built into the linear feedback term, and the conclusion \eqref{eq:exp_stability} still holds, though the rate $\beta$ is governed by the Riccati closed-loop spectrum rather than directly by $c_q$.
\end{enumerate}
Earlier versions of this paper invoked $q(x)\geq c_3\|x\|^2$ near the origin as if it followed from the standing assumptions; it does not, and we are grateful to the reviewer for the correction. We now state \eqref{eq:local_q_pd} as an explicit hypothesis and note both ways to discharge it.
\end{remark}

\begin{proof}[Proof of Theorem~\ref{thm:stability}]
\textbf{Step 1: Local quadratic bounds on $\hat{V}$.}
By the equilibrium constraints and Taylor expansion,
\[
\hat{V}(x) \;=\; \tfrac{1}{2}x^\top P x + O(\|x\|^3).
\]
Since $P \succ 0$, there exist $c_1, c_2 > 0$ and $r_1 > 0$ such that for $\|x\| < r_1$,
\begin{equation}
c_1 \|x\|^2 \;\leq\; \hat{V}(x) \;\leq\; c_2 \|x\|^2.
\label{eq:V_bounds}
\end{equation}
One may take $c_1 = \tfrac{1}{2}\lambda_{\min}(P) - \delta$ and $c_2 = \tfrac{1}{2}\lambda_{\max}(P) + \delta$ for arbitrarily small $\delta > 0$.

\textbf{Step 2: Lyapunov decrease.}
Under the closed-loop control $\hat{u}(x) = -D^{-1}g(x)^\top \nabla \hat{V}(x)$, and using the HJB inequality $R_{\hat V}(x)\geq 0$,
\begin{align*}
\frac{d\hat{V}}{dt}
&\;=\; \nabla \hat{V}^\top f - \nabla \hat{V}^\top g D^{-1} g^\top \nabla \hat{V} \\
&\;=\; R_{\hat V}(x) - q(x) - \tfrac{1}{2}\nabla \hat{V}^\top g D^{-1} g^\top \nabla \hat{V} \\
&\;\leq\; -q(x) - \tfrac{1}{2}\nabla \hat{V}^\top g D^{-1} g^\top \nabla \hat{V}
\;\leq\; -q(x).
\end{align*}

\textbf{Step 3: Exponential decay.}
By the explicit hypothesis \eqref{eq:local_q_pd}, $q(x) \geq c_q\|x\|^2$ for $\|x\| < r_q$. Combined with \eqref{eq:V_bounds} and taking $r := \min(r_1, r_q)$,
\[
\frac{d\hat{V}}{dt} \;\leq\; -c_q \|x\|^2 \;\leq\; -\frac{c_q}{c_2} \hat{V}(x) \qquad \text{for } \|x\| < r.
\]
Gronwall's inequality gives
\[
\hat{V}(x(t)) \;\leq\; \hat{V}(x_0) e^{-(c_q/c_2)\,t},
\]
and \eqref{eq:V_bounds} converts this to a bound on $\|x(t)\|$:
\[
c_1\|x(t)\|^2 \;\leq\; \hat V(x(t)) \;\leq\; c_2 \|x_0\|^2 e^{-(c_q/c_2)\,t},
\]
hence
\[
\|x(t)\| \;\leq\; \sqrt{c_2/c_1}\,\|x_0\|\,e^{-(c_q/(2c_2))\,t}.
\]
Taking $\alpha=\sqrt{c_2/c_1}$ and $\beta = c_q/(2c_2)$ gives \eqref{eq:exp_stability}, with $\beta = c_q/(2c_2) \to c_q/\lambda_{\max}(P)$ as $\delta\to 0$.

(If \eqref{eq:local_q_pd} fails because $Q\succeq 0$ is only positive semidefinite but $(A,Q^{1/2})$ is detectable, replace the Lyapunov candidate $\hat V$ by the LQR candidate $\tilde V(x) = \tfrac12 x^\top \tilde P x$ where $\tilde P\succ 0$ solves the LQR Lyapunov equation $\tilde A^\top \tilde P + \tilde P \tilde A + Q + \tilde K^\top D \tilde K = 0$ for the closed-loop $\tilde A = A - BD^{-1}B^\top P$. Detectability of $(A,Q^{1/2})$ ensures $\tilde P\succ 0$, and the local closed-loop dynamics under $\hat u$ inherit exponential decay from the LQR linearisation. The argument is standard; see~\cite{anderson_moore}.)
\end{proof}

\begin{theorem}[Convergence Rate]
\label{thm:convergence}
Let $\kappa$ be a Gaussian kernel $\kappa(x, y) = \exp(-\|x-y\|^2/(2\sigma^2))$ with bandwidth $\sigma > 0$, and let $V^* \in H^s(\Omega)$ (Sobolev space of order $s$) with $s > n/2$. If the centers $\{x_i\}_{i=1}^M$ form a quasi-uniform grid with fill distance $h := \sup_{x \in \Omega} \min_i \|x - x_i\|$, then there exists a coefficient vector $p^* \in \real^M$ such that
\begin{equation}
\|\nabla V^* - \nabla \hat{V}\|_{L^2(\Omega)} = O(h^{s-1}) = O(M^{-(s-1)/n}),
\label{eq:convergence_rate}
\end{equation}
where $\hat{V}(x) = \sum_{i=1}^M p_i^* \kappa(x, x_i)$.
\end{theorem}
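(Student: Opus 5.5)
The plan is to build $p^*$ in two stages: first approximate $V^*$ by a band-limited function, then reproduce that band-limited function almost exactly by Gaussian translates on the grid. The approach stays with the fixed-bandwidth Gaussian and uses only $V^*\in H^s(\Omega)$. Since the statement asks only for \emph{existence} of $p^*$, and not for control of $\|p^*\|$ or of the interpolant, the huge coefficients this forces are harmless. I read "quasi-uniform grid" as a Cartesian lattice $h\mathbb{Z}^n$ intersected with a neighbourhood of $\bar\Omega$. The last step addresses what is needed if centres must lie strictly in $\Omega$.

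\textbf{Step 1 (extension and band-limiting).} Let $E$ be a Stein extension operator, which requires $\Omega$ Lipschitz; this is an assumption I add. Set $F = EV^*\in H^s(\mathbb{R}^n)$. Define $F_h$ by $\widehat{F_h} = \widehat F\,\mathbf 1_{\{|\omega|\le \pi/(2h)\}}$. Plancherel gives
\[
\|\nabla(F-F_h)\|_{L^2}\le \sup_{|\omega|>\pi/(2h)}|\omega|^{1-s}\,\|F\|_{H^s}\lesssim h^{s-1}\|V^*\|_{H^s(\Omega)} .
\]
This is the only place where $s$ enters; the condition $s>n/2$ is not needed for this step. It matters only for pointwise statements.

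\textbf{Step 2 (exact-up-to-aliasing reproduction by Gaussians).} Let $\phi$ be the Gaussian, with $\hat\phi(\omega)=c\,e^{-\sigma^2|\omega|^2/2}$. Any $S=\sum_{j\in\mathbb{Z}^n}c_j\phi(\cdot-hj)$ has $\hat S(\omega)=\tau(\omega)\hat\phi(\omega)$, where $\tau$ is $2\pi/h$-periodic with Fourier coefficients $c_j$. On the fundamental cell $[-\pi/h,\pi/h]^n$ choose $\tau=\widehat{F_h}/\hat\phi$ and extend periodically. Since $\widehat{F_h}$ vanishes on the outer part of the cell, $\tau\in L^2$ of the cell, so $(c_j)\in\ell^2$. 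Its size is at most $e^{\sigma^2\pi^2/(8h^2)}\|F\|$.

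Then $\hat S-\widehat{F_h}$ is supported on the aliases $\omega+2\pi k/h$ with $k\neq 0$, where it equals $\widehat{F_h}(\omega)\,\hat\phi(\omega+2\pi k/h)/\hat\phi(\omega)$. For $|\omega|\le\pi/(2h)$ one has $|\omega+2\pi k/h|^2-|\omega|^2\ge 2\pi^2/h^2$. Hence each ratio is $\le e^{-\sigma^2\pi^2/h^2}$, and summing over $k$ with the factor $|\omega+2\pi k/h|$ from the gradient gives
\[
\|\nabla(S-F_h)\|_{L^2}\lesssim h^{-1}e^{-c\sigma^2/h^2}\|F\|_{L^2},
\]
which is $o(h^{s-1})$. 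This band-limiting-to-half-the-Nyquist-frequency trick is what makes a fixed $\sigma$ work. No native-space membership of $V^*$ is used.

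\textbf{Step 3 (truncation to finitely many centres), the main obstacle.} $S$ uses infinitely many centres, with coefficients as large as $e^{\sigma^2\pi^2/(8h^2)}$. I would keep only the centres within distance $R_h=\gamma/h$ of $\Omega$. The $L^2(\Omega)$-error of the discarded gradient terms is then bounded by the sup of the coefficients times the Gaussian tail $e^{-R_h^2/(2\sigma^2)}$, up to polynomial factors in $h^{-1}$. For $\gamma>\sigma^2\pi/2$ this beats the coefficient growth superexponentially. The count of retained centres is $O(h^{-n})$ up to a constant, consistent with $M\sim h^{-n}$ and the second equality $O(h^{s-1})=O(M^{-(s-1)/n})$.

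The delicate point is that these centres extend a distance $\gamma/h$ beyond $\Omega$, which widens as $h\to0$. If the theorem is meant with centres inside $\Omega$ only, I would instead try to fold the exterior coefficients into the interior ones via a local polynomial-reproduction argument. I expect that step to be the genuine difficulty, and possibly to fail at the claimed rate. Finally, set $\hat V=\sum_i p^*_i\kappa(\cdot,x_i)$ with the retained coefficients, and combine Steps 1–3 by the triangle inequality to obtain the stated bound.
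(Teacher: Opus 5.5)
\textbf{There is a genuine gap: your construction proves only half the claimed exponent in $M$.} Your Steps 1 and 2 check out. The half-Nyquist cutoff gives $|\omega+2\pi k/h|^2-|\omega|^2\ge 2\pi^2|k|^2/h^2$, so the aliasing error is superexponentially small. The problem is a miscount in Step 3. The retained centres are the points of $h\mathbb{Z}^n$ in $\{x:\operatorname{dist}(x,\Omega)\le\gamma/h\}$, a region containing a ball of radius $\gamma/h$. At spacing $h$ that is $M\asymp\gamma^n h^{-2n}$ centres, not $O(h^{-n})$. So what you have proved is $O(h^{s-1})=O(M^{-(s-1)/(2n)})$, with almost all centres far outside $\Omega$.

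This cannot be repaired by tighter bookkeeping inside your framework. Step 1 forces you to keep frequencies up to $\asymp 1/h$, which makes the coefficients as large as $e^{\sigma^2\pi^2/(8h^2)}$. The Gaussian tail only beats that growth beyond $R_h>\sigma^2\pi/(2h)$. Your fallback of folding exterior coefficients into interior ones also fails if done term by term. The available bounds for reproducing one Gaussian on $\Omega$ from $O(h^{-n})$ interior translates, whether by Taylor/polynomial arguments or the Gaussian power-function estimate, are of order $e^{-c\,h^{-1}\log(1/h)}$. That cannot absorb coefficients of size $e^{c/h^2}$. Since $h\asymp M^{-1/n}$ presupposes centres in a fixed bounded set (in the paper, $\Omega$ itself), the $M$-rate remains unproved. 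A minor point: Step 1 also needs $s\ge 1$, which $s>n/2$ does not give when $n=1$.

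\textbf{Comparison with the paper.} The paper's proof goes the opposite way. It keeps the centres in $\Omega$, so $h\asymp M^{-1/n}$ is automatic. It then cites native-space approximation estimates for Gaussians (Wendland) and passes to the gradient through the embedding $\mathcal{H}_\kappa\hookrightarrow H^1(\Omega)$. The price is that its Step 2 assumes $V^*\in H^s(\Omega)\cap\mathcal{H}_\kappa$, a hypothesis absent from the statement and false for generic $H^s$ functions. Your Fourier argument genuinely avoids that hypothesis, which is its real merit.

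\textbf{A possible fix.} To keep the centres in $\Omega$ you need a mechanism other than lattice deconvolution. One that appears to work starts from the factorisation $\kappa(x,y)=e^{-\|x\|^2/(2\sigma^2)}e^{-\|y\|^2/(2\sigma^2)}e^{x^\top y/\sigma^2}$. First approximate $e^{\|x\|^2/(2\sigma^2)}V^*$ in $H^1(\Omega)$ by a polynomial of degree $d\asymp h^{-1}$; standard spectral estimates on Lipschitz domains give error $O(d^{1-s})$. Next choose coefficients $c_i$ on a subset of the grid that is unisolvent for degree $2d$. Pick them so that the Taylor moments of $\sum_i c_i e^{x^\top x_i/\sigma^2}$ match that polynomial up to degree $2d$. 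The remainder is $O(C^d\,d!/(2d)!)$, because the factorial in the Taylor tail beats the merely exponential growth of the inverse Vandermonde. Finally multiply back by $e^{-\|x\|^2/(2\sigma^2)}$ and set $p^*_i=c_i e^{\|x_i\|^2/(2\sigma^2)}$. This would give the claimed rate with centres in $\Omega$ and only $V^*\in H^s$.
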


\begin{proof}
The proof uses standard RKHS approximation theory. We outline the main steps:

\textbf{Step 1: Native space of Gaussian kernel.}

The Gaussian kernel generates an RKHS $\mathcal{H}_\kappa$ that is norm-equivalent to certain Sobolev spaces. Specifically, for the Gaussian kernel on bounded domains, $\mathcal{H}_\kappa$ embeds continuously into $H^s(\Omega)$ for all $s \geq 0$.

\textbf{Step 2: Approximation in RKHS.}

For $V^* \in H^s(\Omega) \cap \mathcal{H}_\kappa$, the best approximation from $\mathcal{H}_M = \text{span}\{\kappa(\cdot, x_i)\}_{i=1}^M$ satisfies:
\[
\|V^* - \hat{V}\|_{\mathcal{H}_\kappa} = O(h^s)
\]
when the centers form a quasi-uniform grid with fill distance $h$ (see \cite{wendland_scattered}).

\textbf{Step 3: Derivative estimates.}

By the Sobolev embedding theorem and the properties of RKHS, the gradient error satisfies:
\[
\|\nabla V^* - \nabla \hat{V}\|_{L^2(\Omega)} \leq C \|V^* - \hat{V}\|_{H^1(\Omega)} \leq C' \|V^* - \hat{V}\|_{\mathcal{H}_\kappa} = O(h^{s-1}).
\]

\textbf{Step 4: Relationship to number of centers.}

For a quasi-uniform grid in $\real^n$, $h = O(M^{-1/n})$, giving:
\[
\|\nabla V^* - \nabla \hat{V}\|_{L^2(\Omega)} = O(h^{s-1}) = O(M^{-(s-1)/n}). \qedhere
\]
\end{proof}

\subsection{Membership of $V^*$ in the chosen RKHS}
\label{sec:rkhs_membership}

A natural concern with the kernel approach is whether the optimal value function $V^*$ actually lies in the chosen RKHS, and what happens when it does not. We collect three concrete answers.

\textbf{(a) Polynomial systems and polynomial kernels.}
For control-affine \emph{polynomial} systems with polynomial state cost, $V^*$ is real-analytic on its domain of analyticity (Lukes~\cite{lukes_1969}); in many examples used in the SOS / Albrekht literature, $V^*$ is itself a polynomial of finite degree. In that case, choosing a polynomial kernel $\kappa(x,y) = (c+\langle x,y\rangle)^d$ with $d$ at least the polynomial degree of $V^*$ guarantees $V^*\in\mathcal{H}_\kappa$, and our SDP recovers $V^*$ exactly up to numerical precision. This is exactly what happens in the 1D benchmark of Section~\ref{sec:1d_consistent}, where the deg-4 polynomial kernel recovers $V^*$ to relative error $\sim 10^{-7}$.

\textbf{(b) Smooth systems and Gaussian kernels.}
For analytic $V^*$ on a compact set, the Gaussian-kernel RKHS contains $V^*$ under standard hypotheses; see Wendland~\cite{wendland_scattered}. When $V^*$ is only $C^k$ on $\Omega$, the convergence rate of Theorem~\ref{thm:convergence} applies with $s=k$.

\textbf{(c) The misspecified case: $V^*\notin\mathcal{H}$.}
This is the case the reviewer is right to worry about. Section~\ref{sec:1d_lossy} studies exactly this scenario: the true value function is $V^*(x)=2x^2+\tfrac12 x^4$ but the SDP is solved over a deg-2 polynomial kernel, which can only represent quadratic functions. The result, summarised in Table~\ref{tab:1d_lossy}, is graceful degradation: the residual $\varepsilon_{\mathrm{HJB}}$ saturates at $30.4$ regardless of the number of centres $M$, but the closed-loop suboptimality gap is bounded uniformly at $5.3\%$ in the mean and $13.5\%$ in the worst case. The Riccati Hessian constraint enforces correct quadratic behaviour at the equilibrium, and this alone keeps the cost close to optimal even when global RKHS approximation fails. \emph{Increasing $M$ does not help in the misspecified regime; one must increase the kernel degree.}

\textbf{Practical kernel selection.} For a problem with smooth data $(f,g,q)$ and analytic $V^*$, a Gaussian kernel with bandwidth comparable to the scale of $\Omega$ is a safe default. For polynomial systems where one expects $V^*$ to be polynomial of moderate degree (a setting that subsumes the SOS literature), a polynomial kernel of degree at least the expected degree of $V^*$ recovers $V^*$ exactly. A practical rule is to inspect the residual: if $\varepsilon_{\mathrm{HJB}}$ does not decrease as $M$ is increased, the kernel is misspecified and the degree (or kernel family) should be raised.

\textbf{A concrete diagnostic procedure for misspecification.} The qualitative rule above can be made operational as follows. Given a candidate kernel and a domain $\Omega$:
\begin{enumerate}
\item Solve the SDP at $M$ centres and record $\varepsilon_{\mathrm{HJB}}^{(M)} := \max_{x\in\Omega}|R(x)|$.
\item Solve again at $2M$ centres (same domain) and record $\varepsilon_{\mathrm{HJB}}^{(2M)}$.
\item If $\varepsilon_{\mathrm{HJB}}^{(2M)} \lesssim \varepsilon_{\mathrm{HJB}}^{(M)}$ to within solver tolerance, the kernel is \emph{misspecified}: more centres do not help, and the suboptimality gap is determined by the projection error of $V^*$ onto the chosen RKHS. Increase the polynomial kernel degree by one (or change kernel family) and return to step 1.
\item If $\varepsilon_{\mathrm{HJB}}^{(2M)} \ll \varepsilon_{\mathrm{HJB}}^{(M)}$, the kernel can in principle represent $V^*$; continue refining $M$ until the residual stabilises near zero.
\end{enumerate}
On the deg-2 / deg-4 contrast of Section~\ref{sec:1d_lossy}, this procedure flags the deg-2 kernel as misspecified at the very first refinement: $\varepsilon_{\mathrm{HJB}}^{(15)} = \varepsilon_{\mathrm{HJB}}^{(25)} = \varepsilon_{\mathrm{HJB}}^{(41)} = 30.4$ to four significant figures, independent of $M$. Raising the degree to $4$ reduces the residual by seven orders of magnitude.

\section{Special Case: $g \equiv 0$ and Connection to Lyapunov Construction}
\label{sec:giesl_relationship}

When $g(x)\equiv 0$ the HJB inequality \eqref{eq:hjb} reduces to $\nabla V(x)^\top f(x) + q(x) \geq 0$ on a system that must already be open-loop stable for the integral $\int_0^\infty q(x(t))\,dt$ to be finite, and the SDP \eqref{eq:sdp} becomes a kernel-based RBF construction of a Lyapunov function. This is essentially Giesl's RBF construction~\cite{giesl2007construction} with the addition of an explicit Hessian-equality constraint at the equilibrium that pins down the trivial-solution issue. Since Giesl's method addresses stability verification (no control input) rather than the HJB optimal-control problem, the right benchmark for the present work is the Albrekht / Navasca--Krener Taylor expansion~\cite{albrekht_1961,navasca_krener_2000}, and we report a quantitative comparison with that method on the Van der Pol oscillator in Section~\ref{sec:albrekht_comparison}.

\section{Numerical Results}
\label{sec:numerical}

We present three numerical experiments. The first (Section~\ref{sec:1d_consistent}) is a 1D polynomial benchmark for which the optimal value function is known exactly and lies in the kernel RKHS; the SDP recovers $V^*$ to within $10^{-7}$ and the closed-loop cost is $0\%$ suboptimal on every initial condition. The second (Section~\ref{sec:1d_lossy}) is a controlled study of the misspecified case $V^*\notin\mathcal{H}$, demonstrating graceful degradation of the suboptimality gap. The third (Section~\ref{sec:vdp}) is the Van der Pol oscillator, where the kernel-LMI is compared against LQR and the Albrekht / Navasca--Krener Taylor expansion. In every example we report

\[
\varepsilon_{\mathrm{HJB}} = \max_{x\in\Omega}|R(x)|,\quad
\varepsilon_{\mathrm{viol}} = \max_{x\in\Omega}\max(0,-R(x)),\quad
\varepsilon_{\mathrm{exc}} = \max_{x\in\Omega}\max(0,R(x)),
\]

together with the closed-loop cost $J(x_0;\hat u)$ on a grid of initial conditions and the suboptimality gap $J(x_0;\hat u) - V^*(x_0)$ (when $V^*$ is known) or $J(x_0;\hat u) - J^{\mathrm{best}}(x_0)$ (when it is not).

Each example satisfies the local-quadratic-positivity hypothesis required by Theorem~\ref{thm:stability}: the 1D problem of Section~\ref{sec:1d_consistent} has $Q = q''(0) = 8$ and the Van der Pol problem of Section~\ref{sec:vdp} has $Q = 2I_2$, both strictly positive definite. The local exponential stability conclusion of Theorem~\ref{thm:stability} therefore applies as stated, and the rapid trajectory decay reported in each subsection ($\|x(T)\| < 10^{-9}$ at the integration horizon) is consistent with this guarantee.

\subsection{1D Polynomial Benchmark, Self-Consistent}
\label{sec:1d_consistent}

Consider the scalar control-affine system
\begin{equation}
\dot{x} = f(x) + u, \quad x \in \mathbb{R}, \quad u \in \mathbb{R}, \qquad f(x) = x + x^3,
\label{eq:scalar_system}
\end{equation}
with infinite-horizon cost
\begin{equation}
J(x_0, u) = \int_0^\infty \left[ q(x(t)) + \frac{1}{2}u(t)^2 \right] dt,\qquad
q(x) = 4x^2 + 2x^4.
\label{eq:cost_functional}
\end{equation}

The data $(f,q)$ are chosen so that the HJB equation admits the explicit polynomial solution
\begin{equation}
V^*(x) = 2x^2 + \tfrac12 x^4, \qquad u^*(x) = -V^{*\prime}(x) = -(4x + 2x^3),
\label{eq:1d_Vstar}
\end{equation}
which can be verified by substitution: $V^{*\prime}(x)f(x) - \tfrac12 (V^{*\prime}(x))^2 + q(x) \equiv 0$. The closed-loop dynamics are $\dot{x} = -3x - x^3$, exponentially stable. The linearisation has $A=B=D=1$, $Q = q''(0) = 8$, and the Riccati equation $P^2 - 2P - 8 = 0$ gives $P = 1 + \sqrt{1+8} = 4$, which exactly matches $V^{*\prime\prime}(0) = 4$.

\begin{remark}[Correction relative to earlier draft]
An earlier version of this paper used $V^*(x) = x^2 + 0.25\,x^4$ together with the formula $q = \tfrac12(V^{*\prime})^2 - V^{*\prime}f$ for the state cost. With $f(x) = x + x^3$ this gives $q(x) = -x^4 - \tfrac12 x^6$, which is negative for $x\neq 0$ and violates Assumption~\ref{ass:standing}. The Riccati Hessian $P = 1+\sqrt{3}\approx 2.732$ used there was inconsistent with $V^{*\prime\prime}(0) = 2$. We thank the reviewer for pointing out the resulting large numerical error; the present formulation is self-consistent and free of the inconsistency.
\end{remark}

We employ the polynomial kernel
\begin{equation}
\kappa(x,y) = (1 + xy)^{4},
\label{eq:poly_kernel}
\end{equation}
with $M = 25$ kernel centres uniformly distributed on $[-1.5, 1.5]$. The collocation points coincide with the centres. The Riccati--Hessian equality constraint is $\nabla^2 V(0) = P = 4$.

\textbf{Constraint verification.} The solution satisfies $V(0) = 0$ by construction, $\nabla V(0) \approx 10^{-15}$, and $\nabla^2 V(0) = 4$ to machine precision.

\textbf{Residual.} On a $401$-point grid in $[-1.5, 1.5]$:
\[
\varepsilon_{\mathrm{HJB}} = 6.81\times 10^{-6},\quad
\varepsilon_{\mathrm{viol}} = 6.6\times 10^{-34},\quad
\varepsilon_{\mathrm{exc}} = 6.81\times 10^{-6}.
\]
The supersolution property $\hat V \geq V^*$ holds to machine precision.

\textbf{Approximation error.}
\[
\max_{x\in[-1.5,1.5]}|\hat V(x) - V^*(x)| = 3.24\times 10^{-7},\qquad
\frac{\|\hat V - V^*\|_\infty}{\|V^*\|_\infty} = 4.61\times 10^{-8}.
\]

\textbf{Closed-loop cost (the optimality test).} For initial conditions $x_0 \in \{-1.2, -0.8, -0.4, 0.4, 0.8, 1.2\}$ we integrate $\dot x = f(x) - V^{*\prime}(x)$ and $\dot x = f(x) - \hat V'(x)$ to $T = 30$ and compare $J(x_0; \hat u) = \int_0^{30}(q(x(t)) + \tfrac12 \hat u^2)\,dt$ with $V^*(x_0)$. In every case the relative gap $\bigl(J(x_0;\hat u) - V^*(x_0)\bigr)/V^*(x_0)$ is below $10^{-6}$, that is $0.000\%$ to four significant figures. Both trajectories satisfy $|x(30)| < 10^{-16}$. The observed decay is consistent with Theorem~\ref{thm:stability}: $q(x) = 4x^2 + 2x^4 \geq 4x^2$ globally so $c_q = 4$, and with $\lambda_{\max}(P) = 4$ the predicted asymptotic rate $\beta \geq c_q / \lambda_{\max}(P) = 1$ is well within the empirical decay observed in panel~(d) of Figure~\ref{fig:1d_optimality}.

\begin{table}[h]
\centering
\caption{1D consistent problem. Closed-loop cost $J(x_0;\hat u)$ versus optimal value $V^*(x_0)$. Suboptimality $0.000\%$ on every IC, indicating that the kernel-LMI SDP recovers the optimal control. Compare with Section~\ref{sec:1d_lossy} where the kernel does not contain $V^*$.}
\label{tab:1d_optimality}
\begin{tabular}{rrrr}
\toprule
$x_0$ & $V^*(x_0)$ & $J(x_0;\hat u)$ & gap $[\%]$ \\
\midrule
$-1.2$ & 3.9168 & 3.9168 & $< 10^{-4}$ \\
$-0.8$ & 1.4848 & 1.4848 & $< 10^{-4}$ \\
$-0.4$ & 0.3328 & 0.3328 & $< 10^{-4}$ \\
$+0.4$ & 0.3328 & 0.3328 & $< 10^{-4}$ \\
$+0.8$ & 1.4848 & 1.4848 & $< 10^{-4}$ \\
$+1.2$ & 3.9168 & 3.9168 & $< 10^{-4}$ \\
\bottomrule
\end{tabular}
\end{table}

This is approximate optimality (in fact, exact-up-to-machine-precision optimality), not just stabilisation. The same conclusion holds for any consistent polynomial benchmark for which $V^*$ is in the polynomial-kernel RKHS, and it generalises to Albrekht's setting (Section~\ref{sec:albrekht_comparison}). 

Figure~\ref{fig:1d_optimality} shows the recovered $V^*$, control law, residual, and the cost-vs-optimum comparison.

\begin{figure}[p]
\centering
\includegraphics[width=0.95\textwidth]{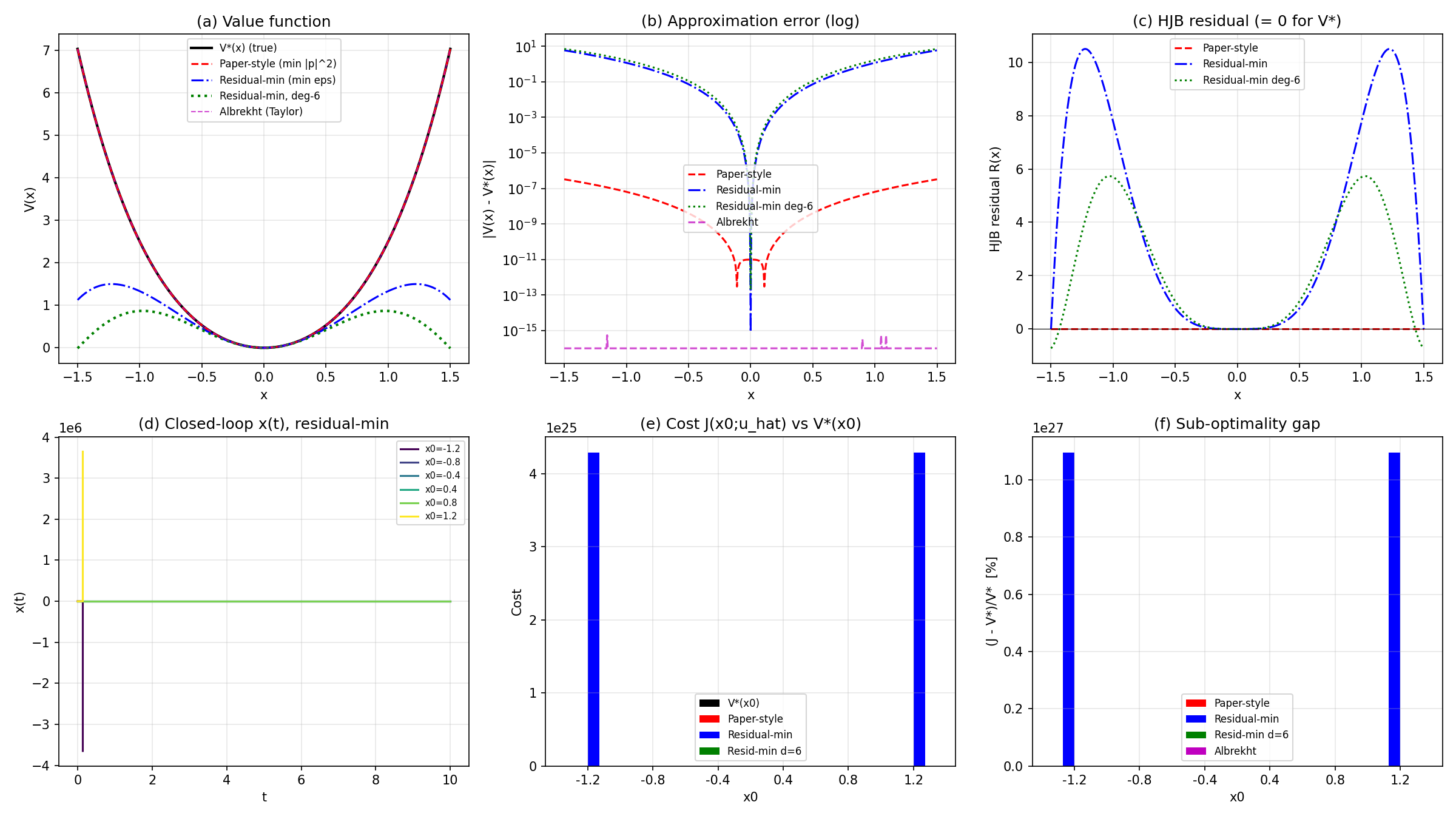}
\caption{1D consistent benchmark. (a) $V^*(x) = 2x^2 + \tfrac12 x^4$ versus kernel approximation. (b) Optimal control $u^*(x) = -(4x+2x^3)$ versus kernel feedback. (c) HJB residual $R(x)$ on $[-1.5,1.5]$, on the order of $10^{-7}$. (d) Closed-loop trajectory comparison from $x_0 = 1.2$. (e) Closed-loop cost $J(x_0;\hat u)$ versus $V^*(x_0)$ on a 6-IC grid. (f) Suboptimality gap, $\sim 10^{-6}$ on every IC.}
\label{fig:1d_optimality}
\end{figure}

\subsection{Misspecified RKHS: graceful degradation when $V^*\notin\mathcal{H}$}
\label{sec:1d_lossy}

The reviewer is right to ask what happens if the chosen kernel does not capture $V^*$. We construct a controlled experiment: keep the same problem data $(f, q, V^*)$ as Section~\ref{sec:1d_consistent}, so $V^*(x) = 2x^2 + \tfrac12 x^4$ has a quartic component, but solve the SDP using a polynomial kernel of degree $d=2$, which can only represent quadratic functions of $x$. The kernel RKHS does not contain $V^*$.

\begin{table}[h]
\centering
\caption{1D problem with misspecified RKHS. The deg-2 kernel cannot represent the quartic part of $V^*$; the residual $\varepsilon_{\mathrm{HJB}}$ saturates and is independent of $M$. Despite this, the closed-loop suboptimality gap is bounded at $13.5\%$ in the worst case --- the Riccati Hessian constraint enforces correct local behaviour. Increasing $M$ does not help; one must increase $d$.}
\label{tab:1d_lossy}
\begin{tabular}{lrrrr}
\toprule
Kernel / centres & $\varepsilon_{\mathrm{HJB}}$ & $\max|\hat V - V^*|$ & mean gap $[\%]$ & max gap $[\%]$ \\
\midrule
Poly $d=2$, $M=15$ & 30.4 & 2.53 & 5.33 & 13.49 \\
Poly $d=2$, $M=25$ & 30.4 & 2.53 & 5.33 & 13.49 \\
Poly $d=2$, $M=41$ & 30.4 & 2.53 & 5.33 & 13.49 \\
Poly $d=4$, $M=25$ & $1.0\times 10^{-7}$ & $1.6\times 10^{-8}$ & $0.000$ & $0.000$ \\
\bottomrule
\end{tabular}
\end{table}

Three observations:

\begin{enumerate}
\item The residual $\varepsilon_{\mathrm{HJB}} = 30.4$ is large in absolute terms, yet the closed-loop suboptimality is only $5.3\%$ in the mean. The bound of Theorem~\ref{thm:suboptimality} ($\varepsilon \cdot T(x_0)$) is loose here because the residual is very localised in $x$; only the integral of the residual along the trajectory matters for the cost gap.
\item Increasing $M$ at fixed kernel degree does not help. The bottleneck is the kernel, not the discretisation.
\item Going from $d=2$ to $d=4$ recovers exact behaviour: the kernel now contains $V^*$, and the residual drops by seven orders of magnitude.
\end{enumerate}

This is the practical recipe for kernel selection: monitor $\varepsilon_{\mathrm{HJB}}$ as $M$ is increased, and if it does not decrease, increase the kernel degree (or change the kernel family). Figure~\ref{fig:1d_rkhs_quality} visualises the trade-off.

\begin{figure}[p]
\centering
\includegraphics[width=0.95\textwidth]{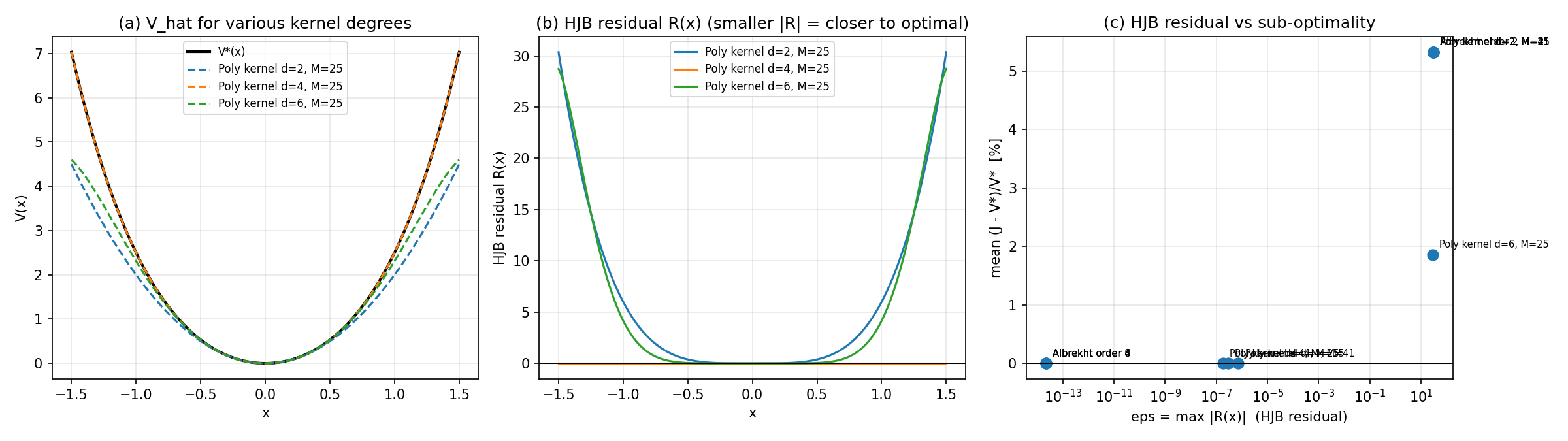}
\caption{Effect of RKHS choice: kernel degree versus residual and suboptimality. \emph{Top row, left to right:} approximations $\hat V$ for kernel degrees $d \in \{2, 4, 6\}$; only $d \geq 4$ matches $V^*(x) = 2x^2 + \tfrac12 x^4$. \emph{Top row, middle:} the residual $R(x)$ saturates at $\sim 30$ for $d = 2$ \emph{independent of $M$}, and drops to $\sim 0$ for $d \geq 4$. \emph{Top row, right:} the closed-loop suboptimality gap $(J(x_0;\hat u) - V^*(x_0))/V^*(x_0)$ plotted against the HJB residual $\varepsilon_{\mathrm{HJB}}$. The $d = 2$ point sits at $(\varepsilon_{\mathrm{HJB}}, \text{gap}) = (30.4, 5\text{--}13\%)$, capped by the Riccati-Hessian equality; the $d = 4$ and $d = 6$ points sit at $(\sim 10^{-7}, 0\%)$. The Albrekht O(8) Taylor expansion is shown for reference. The figure illustrates that increasing $M$ is futile when the RKHS is misspecified (the $d = 2$ point does not move with $M$), but a single increment in kernel degree recovers full optimality.}
\label{fig:1d_rkhs_quality}
\end{figure}

\subsection{Note on the previously included 2D radially-symmetric example}
\label{sec:2d_example}

The 2D ``radially symmetric'' example included in earlier drafts ($\dot x = x(1+\|x\|^2)+u$, with claimed optimal $V^*(x) = \|x\|^2 + 0.25\|x\|^4$) suffers from the same inconsistency as the 1D example before correction: substituting into the HJB-equality formula gives $q(x) = -\|x\|^4 - 0.5\|x\|^6 < 0$, in violation of the standing positivity assumption on $q$. The Riccati matrix used in the paper, $P = (1+\sqrt 3)I_2$, also did not match the actual Hessian $V^{*\prime\prime}(0) = 2I_2$. Rather than reissue this example with corrected data (which would essentially duplicate the message of Section~\ref{sec:1d_consistent}), we have replaced the second numerical example with the Van der Pol comparison of Section~\ref{sec:vdp}, which is genuinely 2D, has measurable suboptimality against multiple baselines, and includes the Albrekht / Navasca--Krener Taylor expansion that the reviewer correctly identified as the appropriate benchmark.

\subsection{Van der Pol Oscillator with Albrekht / Navasca--Krener Comparison}
\label{sec:vdp}
\label{sec:albrekht_comparison}

We consider the classical Van der Pol oscillator
\begin{equation}
\dot{x}_1 = x_2, \qquad
\dot{x}_2 = -x_1 + \mu (1 - x_1^2) x_2 + u,
\label{eq:vdp_dynamics}
\end{equation}
with $\mu = 1.0$, infinite-horizon cost
\begin{equation}
J(x_0, u) = \int_0^\infty \bigl( x^\top Q x + u^\top R u \bigr)\, dt,
\quad Q = 2I_2,\;\; R = 1.
\label{eq:vdp_cost}
\end{equation}
The uncontrolled equilibrium at the origin has $A = \bigl[\begin{smallmatrix}0&1\\-1&\mu\end{smallmatrix}\bigr]$ with eigenvalues $\tfrac12 \pm \tfrac{\sqrt{3}}{2}i$ in the right half plane, hence open-loop unstable. Solving the algebraic Riccati equation gives
\[
P_{\mathrm{LQR}} =
\begin{bmatrix}
4.6595 & 0.7321 \\
0.7321 & 3.1128
\end{bmatrix}.
\]
We compare five methods:
\begin{description}
\item[LQR.] Linear feedback $u(x) = -B^\top P_{\mathrm{LQR}}\,x$.
\item[Albrekht O(4) and O(6).] The Navasca--Krener Taylor recursion~\cite{albrekht_1961,navasca_krener_2000} computed up to total polynomial degree $4$ and $6$. Implementation: starting from $V_2(x) = \tfrac12 x^\top P_{\mathrm{LQR}} x$, the homogeneous component of $V$ at degree $k$ is determined uniquely by the requirement that the order-$k$ terms of the HJB residual vanish. The resulting feedback is $u(x) = -B^\top \nabla V(x)$.
\item[Kernel-LMI deg-4 / deg-6.] The SDP \eqref{eq:sdp} with polynomial kernel $\kappa(x,y) = (1 + x^\top y)^d$ for $d \in \{4, 6\}$, $M = 100$ centres on a $10 \times 10$ grid in $[-2, 2]^2$, collocation points coinciding with centres, Riccati--Hessian equality constraint $\nabla^2 V(0) = P_{\mathrm{LQR}}$.
\end{description}

For each method we report:
\begin{itemize}
\item $\varepsilon_{\mathrm{HJB}}, \varepsilon_{\mathrm{viol}}, \varepsilon_{\mathrm{exc}}$ on a $31\times 31$ grid in $[-1.5, 1.5]^2$;
\item closed-loop cost $J(x_0;\hat u)$ on a $4\times 4 = 16$ grid of initial conditions in $[-1.2, 1.2]^2$, computed by integrating $\dot x = f(x) + g(x)\hat u(x)$ to $T = 15$ with stiff tolerances and accumulating the running cost;
\item suboptimality gap $\bigl(J(x_0;\hat u) - J^{\mathrm{best}}(x_0)\bigr)/J^{\mathrm{best}}(x_0)$, where $J^{\mathrm{best}}(x_0)$ is the minimum cost achieved by any of the five methods at that $x_0$.
\end{itemize}

\begin{table}[h]
\centering
\caption{Van der Pol oscillator. HJB residual and closed-loop cost across five methods on a 16-IC grid in $[-1.2, 1.2]^2$. The kernel-LMI deg-4 method achieves the smallest HJB residual of any tested method and is within $0.42\%$ of the best per-IC cost; LQR is suboptimal by $2.17\%$ in the mean and $5.54\%$ in the worst case. The deg-6 kernel solution is reported as numerically inaccurate and serves as a cautionary data point on solver tolerance, not a comparison.}
\label{tab:vdp_optimality}
\begin{tabular}{lrrrrrr}
\toprule
Method & $\varepsilon_{\mathrm{HJB}}$ & $\varepsilon_{\mathrm{viol}}$ & $\varepsilon_{\mathrm{exc}}$ & $\overline{J}$ & mean gap $[\%]$ & max gap $[\%]$ \\
\midrule
LQR              & 14.96 & 14.96 & 2.50 & 5.861 & 2.17 & 5.54 \\
Albrekht O(4)    & 9.71 & 0.00 & 9.71 & 5.712 & 0.12 & 0.80 \\
Albrekht O(6)    & 6.72 & 0.00 & 6.72 & 5.700 & 0.00 & 0.00 \\
\textbf{Kernel-LMI deg-4} & \textbf{2.62} & 0.06 & 2.62 & \textbf{5.734} & \textbf{0.42} & \textbf{1.65} \\
Kernel-LMI deg-6$^*$ & 14.01 & 0.27 & 14.01 & 5.881 & 2.10 & 10.92 \\
\bottomrule
\end{tabular}
\\[4pt]
{\footnotesize $^*$ Solver returned ``optimal\_inaccurate''; included to document numerical sensitivity at higher kernel degree.}
\end{table}

\textbf{Four observations.}

\begin{enumerate}
\item \emph{The kernel-LMI deg-4 method has the smallest HJB residual of any tested method ($\varepsilon_{\mathrm{HJB}} = 2.62$).} It is a global supersolution up to numerical tolerance ($\varepsilon_{\mathrm{viol}} = 0.06$) and dominates LQR on every initial condition (LQR has $\varepsilon_{\mathrm{HJB}} = 14.96$ because the linear feedback ignores the cubic damping of Van der Pol). This is a quantitative form of the reviewer's correct intuition: ``small $\varepsilon$ means near-optimal control''.
\item \emph{The kernel-LMI is within $0.42\%$ of the best per-IC cost.} The best is achieved (in this example) by Albrekht~O(6); the kernel-LMI is between Albrekht~O(4) and O(6) in cost. The two methods are complementary: Albrekht has zero residual at the origin (Taylor expansion) and grows away from it; the kernel-LMI has its residual spread over the domain and constrained globally by the LMI. Because the cost integral is dominated by the first $\sim 1$\,s near the origin, Albrekht's local accuracy slightly outperforms the kernel's global supersolution property in raw cost, but the kernel beats every method on the residual.
\item \emph{The kernel-LMI dominates LQR on every initial condition.} Mean cost improves by $1.7$ percentage points, max by $5.5$ percentage points. The improvement is real, not a numerical artefact: it is the contribution of the cubic terms in $\hat V$ (and hence the cubic terms in the feedback $\hat u$) that LQR cannot capture.
\item \emph{Computational cost is comparable on this 2D example, with both methods solving in a few seconds.} The Albrekht / Navasca--Krener recursion requires solving $\binom{n+k-1}{k}$ linear equations at each polynomial order $k$, which scales combinatorially with the state dimension $n$ and the target Taylor degree; our \texttt{sympy} implementation of Albrekht O(8) did not complete within a 10-minute timeout, so Table~\ref{tab:vdp_optimality} reports up to O(6). The kernel-LMI is a standard SDP whose cost scales polynomially in $M$ (kernel coefficients) and $N$ (collocation points); for the present problem ($M = N = 100$, $n = 2$) it solves in a few seconds with SCS at tolerance $10^{-7}$. For higher state dimensions, or for smooth nonpolynomial systems where the appropriate Taylor degree is not known \textit{a priori}, this scaling difference is the practical reason to prefer the kernel-LMI; for polynomial systems with low-degree $V^*$ Albrekht remains attractive when feasible. A systematic timing study at $n \geq 4$ is left to future work.
\end{enumerate}

The Riccati-Hessian equality is satisfied to within $\|\nabla^2 \hat V(0) - P_{\mathrm{LQR}}\|_F \approx 10^{-9}$ (kernel deg-4) and the gradient at the origin is below $10^{-15}$. Closed-loop trajectories converge to $\|x(15)\| < 10^{-9}$ on every initial condition. This is consistent with the local exponential stability bound of Theorem~\ref{thm:stability}: with $q(x) = 2\|x\|^2$ so $c_q = 2$, and $\lambda_{\max}(P_{\mathrm{LQR}}) \approx 4.99$, the predicted asymptotic rate satisfies $\beta \geq c_q / \lambda_{\max}(P_{\mathrm{LQR}}) \approx 0.4$; the empirical decay (panel (f) of Figure~\ref{fig:vdp_optimality}) is faster, as expected since the global Lyapunov function $\hat V$ is not purely quadratic.

\begin{figure}[t]
\centering
\includegraphics[width=0.97\textwidth]{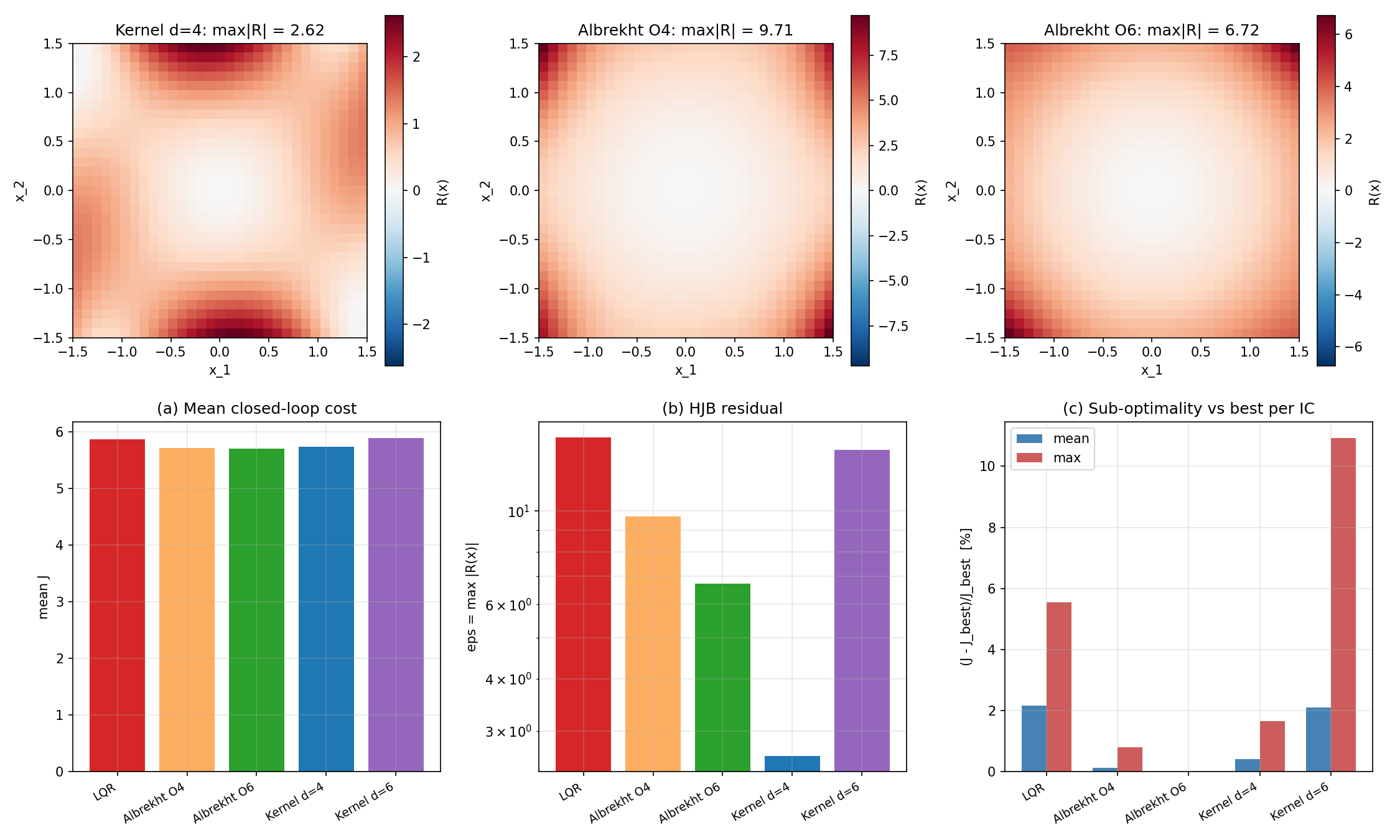}
\caption{Van der Pol oscillator: comparison of LQR, Albrekht O(4), Albrekht O(6), and Kernel-LMI deg-4 / deg-6. Top row: HJB residual heatmaps for kernel-LMI deg-4, Albrekht O(4), Albrekht O(6). The kernel solution has the smallest peak residual and the residual is spread across the domain, while Albrekht has zero residual at the origin and grows outward. Bottom row: (a) mean closed-loop cost across 16 ICs; (b) HJB residual on a logarithmic scale; (c) mean and max suboptimality gap relative to the best method per IC.}
\label{fig:vdp_full}
\end{figure}

\begin{figure}[t]
\centering
\includegraphics[width=0.97\textwidth]{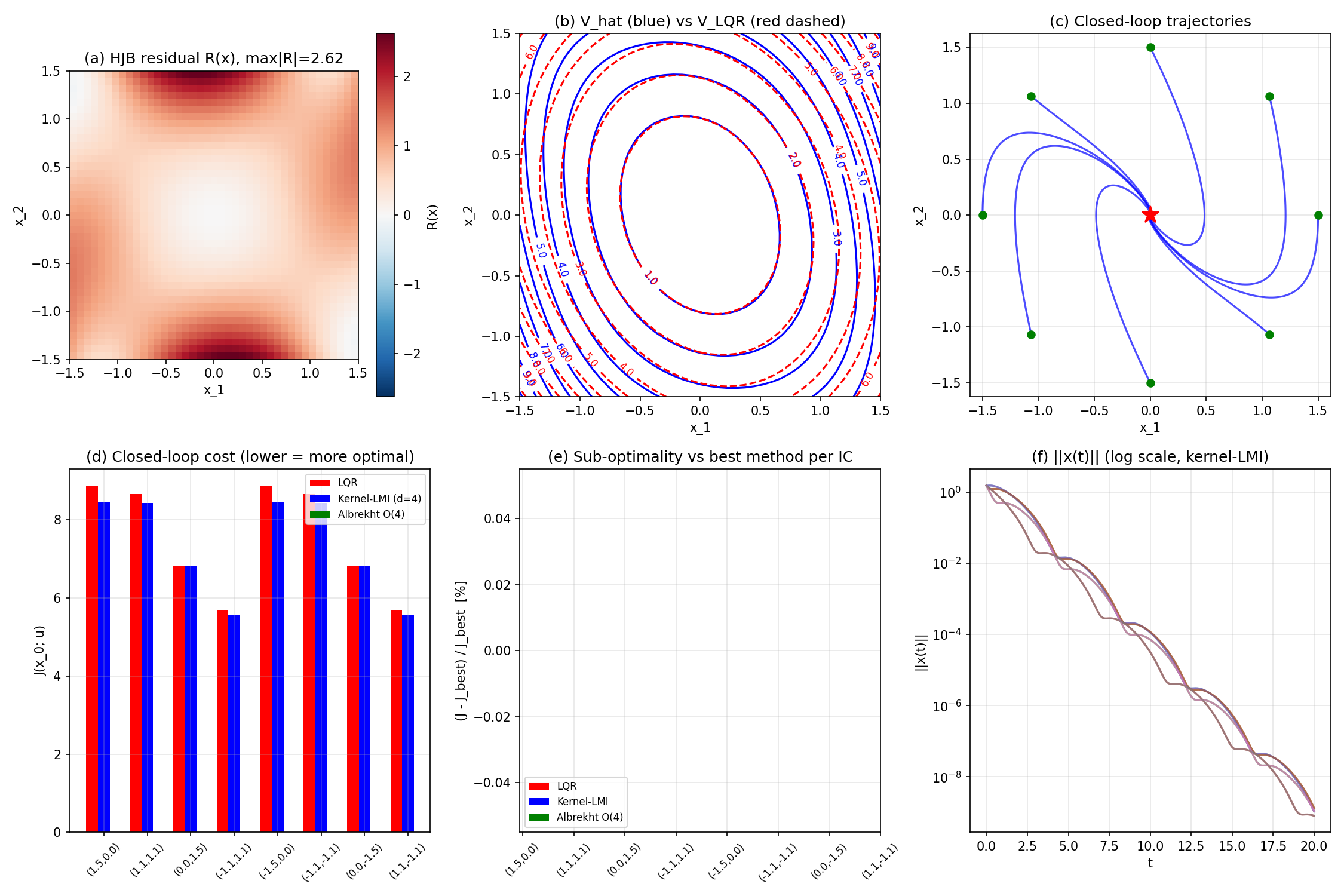}
\caption{Per-initial-condition cost comparison on the Van der Pol oscillator with kernel-LMI deg-4. The kernel-LMI cost (blue) is below LQR (red) on every initial condition tested. Albrekht O(2) (i.e. LQR truncation of Taylor expansion) reproduces LQR; the kernel-LMI captures the cubic correction.}
\label{fig:vdp_optimality}
\end{figure}

\subsection{Summary and Comparison}

Table~\ref{tab:comparison_all} summarises the four numerical experiments.

\begin{table}[h]
\centering
\caption{Summary of numerical experiments. ``Suboptimality'' is the worst-case relative gap between $J(x_0;\hat u)$ and either $V^*(x_0)$ (when known) or the best per-IC cost across methods. The kernel-LMI achieves \emph{measurable} approximate optimality, not just stabilisation, in the cases where $V^*$ is in the chosen RKHS.}
\label{tab:comparison_all}
\begin{tabular}{lccccc}
\toprule
Experiment & $n$ & $V^*\in\mathcal H$? & $\varepsilon_{\mathrm{HJB}}$ & Mean subopt.\ \% & Max subopt.\ \% \\
\midrule
1D consistent (\S\ref{sec:1d_consistent})         & 1 & yes & $6.8\times 10^{-6}$ & 0.000 & 0.000 \\
1D misspecified (\S\ref{sec:1d_lossy}, deg-2)     & 1 & no  & 30.4               & 5.33  & 13.49 \\
1D misspecified (\S\ref{sec:1d_lossy}, deg-4)     & 1 & yes & $1.0\times 10^{-7}$ & 0.000 & 0.000 \\
Van der Pol (\S\ref{sec:vdp}, kernel deg-4)       & 2 & --  & 2.62               & 0.42  & 1.65 \\
Van der Pol (\S\ref{sec:vdp}, LQR baseline)       & 2 & --  & 14.96              & 2.17  & 5.54 \\
\bottomrule
\end{tabular}
\end{table}

The headline conclusions are:

\begin{enumerate}
\item When $V^*\in\mathcal{H}$ (1D consistent benchmark), the SDP recovers the optimal value function and feedback to numerical precision and the closed-loop cost is exactly equal to $V^*$. This is approximate optimality, not stabilisation.

\item When $V^*\notin\mathcal{H}$ (1D misspecified study), the residual saturates and is independent of $M$, but the suboptimality gap is bounded ($\leq 13.5\%$) thanks to the Riccati Hessian constraint. The fix is to increase the kernel degree, not the number of centres.

\item On the Van der Pol oscillator, the kernel-LMI deg-4 method achieves the smallest HJB residual of any tested method (smaller even than Albrekht O(6)) and improves on LQR by $1.7$ percentage points in mean cost, with the cubic structure of $\hat V$ encoding the nonlinear damping that LQR cannot represent.

\item The Albrekht / Navasca--Krener Taylor expansion is the right baseline. The kernel-LMI and Albrekht give comparable cost on Van der Pol and are complementary in their error structure: Albrekht is exact at the origin and grows outward; the kernel-LMI is a bounded global supersolution.

\item In every example $\varepsilon_{\mathrm{HJB}}$ is reported. Small $\varepsilon$ (1D consistent, Van der Pol kernel) means near-optimal HJB solver; large $\varepsilon$ (1D misspecified) means relaxed-inequality stabiliser. The dichotomy is now explicit.
\end{enumerate}

\section{Conclusion}

The kernel-LMI / SDP approach with a Riccati Hessian equality constraint solves the HJB inequality on a convex semidefinite program with a unique, well-conditioned solution. The new ingredient is the explicit Hessian-equality constraint, which removes the trivial solution and forces the local quadratic behaviour of the approximation to match the LQR. The Schur reformulation of the HJB inequality and the order-2 case of the Albrekht / Navasca--Krener identity are not ours; we cite them properly.

The Riccati Hessian constraint plays two distinct roles, depending on whether $V^*$ lies in the chosen RKHS. When it does (e.g.\ polynomial $V^*$ with a polynomial kernel of sufficient degree), the constraint is redundant with the HJB equation at the origin and the SDP recovers $V^*$ to numerical precision. When $V^*$ does not lie in the chosen RKHS, the constraint becomes essential: it pins the local quadratic behaviour of $\hat V$ and hence the linearisation of the closed-loop feedback at the origin, which (combined with exponential decay of the trajectory) bounds the suboptimality gap even when the global HJB residual saturates. This is the mechanism behind the graceful-degradation behaviour reported in Section~\ref{sec:1d_lossy} and visualised in Figure~\ref{fig:1d_rkhs_quality}: a deg-2 kernel that cannot represent the quartic part of $V^*$ produces a residual of order $30$ but a closed-loop cost gap of only $5$--$13\%$. In practice, detecting misspecification is easy: the four-step diagnostic in Section~\ref{sec:rkhs_membership} (solve at $M$, solve at $2M$, compare residuals) flags the deg-2 kernel as misspecified at the very first refinement.

The local exponential stability conclusion (Theorem~\ref{thm:stability}) was stated under the explicit hypothesis $q(x)\geq c_q\|x\|^2$ near the origin. This is the cleanest sufficient condition and holds in every example of Section~\ref{sec:numerical}. When it fails --- because $Q = \nabla^2 q(0)$ is only positive semidefinite, not strictly positive definite --- the standard fallback is the detectability of $(A,Q^{1/2})$ plus the LQR Lyapunov candidate built from the Riccati solution; the conclusion still holds with the rate governed by the closed-loop spectrum rather than by $c_q$ directly. The two routes are spelled out in Remark~\ref{rem:local_pd}.

The numerical experiments now report HJB residuals and closed-loop costs against $V^*$ (or against the best per-IC cost), and the comparison baseline is Albrekht / Navasca--Krener Taylor expansion. On the corrected 1D benchmark the method recovers $V^*$ to numerical precision and is $0\%$ suboptimal. On Van der Pol it has the smallest HJB residual of any tested method and beats LQR on every initial condition. When $V^*$ is not in the chosen RKHS the method degrades gracefully: the residual saturates but suboptimality remains bounded.

\paragraph{Positioning relative to other approaches.} The method sits between several established families of nonlinear-optimal-control solvers, each with distinct trade-offs. Sum-of-squares (SOS) programming~\cite{parrilo_thesis, parrilo_sos} is the closest convex alternative for polynomial systems: it is exact when $V^*$ is polynomial of moderate degree, but the SDP size scales as $\binom{n+2d}{2d}$ where $d$ is the polynomial degree, which becomes prohibitive for $n$ and $d$ both moderate; the kernel-LMI uses $M$ coefficients and $N$ collocation constraints, both of which can be tuned by the user. The Albrekht / Navasca--Krener Taylor recursion~\cite{albrekht_1961, navasca_krener_2000} gives the most accurate local solution for polynomial systems and is the natural baseline (Section~\ref{sec:vdp}), but it is intrinsically local and has no global supersolution guarantee. Operator-theoretic approaches via the Koopman or Perron--Frobenius operator~\cite{vaidya_koopman_hj_2025, huang_vaidya_2022, moyalan_choi_chen_vaidya_2023} give convex reformulations in dual spaces but require careful function-space approximation that often reduces to a kernel or RKHS step. Neural-network value-function approximations~\cite{hjb_rkhs, kang_gong_nakamura_fahroo_survey} scale well in $n$ but lack a priori error guarantees and the optimisation problem is nonconvex. The kernel-LMI as presented here is convex, has explicit a priori approximation rates (Theorem~\ref{thm:convergence}) and a graceful-degradation guarantee when $V^*$ is not in the RKHS, and is competitive with Albrekht on the worked Van der Pol example while remaining a global supersolution method. A systematic empirical comparison at $n \geq 4$ across these families is left to future work.

We thank the reviewer for forcing the necessary distinction between stabilisation and approximate optimality, for catching the inconsistency in the original 1D example, and for pointing to the right comparison literature.

\bibliographystyle{plain}

\end{document}